\documentclass{article}
\usepackage{amsmath,amsfonts,amsthm,amssymb,amscd,color,xcolor}

\usepackage{hyperref}

\colorlet{darkblue}{blue!50!black}

\hypersetup{
    colorlinks,%
    citecolor=darkblue,%
    filecolor=red,%
    linkcolor=darkblue,%
    urlcolor=magenta,%
    pdfnewwindow=true,%
    pdfstartview={FitH}
}

\binoppenalty=9999
\relpenalty=9999

\newcommand{\p}{\partial}
\newcommand{\e}{\varepsilon}

\newcommand{\R}{{\mathbb R}}

\newcommand{\ttau}{{\boldsymbol\tau}}
\newcommand{\barr}{{\boldsymbol|}}

\newcommand{\EE}{{\cal E}}
\newcommand{\FF}{{\cal F}}

\newcommand{\HH}{{\cal H}}
\newcommand{\II}{{\cal I}}
\newcommand{\KK}{{\cal K}}
\newcommand{\JJ}{{\cal J}}
\newcommand{\LL}{{\cal L}}

\newcommand{\XX}{{\cal X}}

\newcommand{\ZZ}{{\cal Z}}

\newcommand{\nnn}{{\boldsymbol{\mathit n}}}

\newcommand{\esssup}{\mathop{\rm ess\ sup}\limits}

\newcommand{\supp}{\mathop{\rm supp}\nolimits}

\theoremstyle{plain}
\newtheorem*{mt}{Main Theorem}
\newtheorem{theorem}{Theorem}[section]
\newtheorem{lemma}[theorem]{Lemma}
\newtheorem{proposition}[theorem]{Proposition}
\newtheorem{corollary}[theorem]{Corollary}
\theoremstyle{definition}

\newtheorem{condition}[theorem]{Condition}
\newtheorem{problem}[theorem]{Problem}
\theoremstyle{remark}

\numberwithin{equation}{section}

\begin{document}
\author{Ka\"\i s Ammari\footnote{D\'epartement de Math\'ematiques, Facult\'e des Sciences de Monastir, Universit\'e de Monastir, 5019 Monastir, Tunisie; E-mail: Kais.Ammari@fsm.rnu.tn}
\and Thomas Duyckaerts\footnote{D\'epartement de Math\'ematiques, Institut Galil\'ee, Universit\'e Paris 13, 99 avenue Jean-Baptiste Cl\'ement, 93430 Villetaneuse, France; E-mail: duyckaer@math.univ-paris13.fr}
\and Armen Shirikyan\footnote{Department of Mathematics, University of Cergy--Pontoise, CNRS UMR 8088, 2 avenue Adolphe Chauvin, 95302 Cergy--Pontoise, France; E-mail: Armen.Shirikyan@u-cergy.fr}
}
\title{Local feedback stabilisation to a non-stationary solution for a damped non-linear wave equation}
\date{}
\maketitle
\begin{abstract}
We study a damped semi-linear wave equation in a bounded domain of~$\R^3$ with smooth boundary. It is proved that any $H^2$-smooth solution can be stabilised locally by a finite-dimensional feedback control supported by a given open subset satisfying a geometric condition. The proof is based on an investigation of the linearised equation, for which we construct a stabilising control satisfying the required properties. We next prove that the same control  stabilises locally the non-linear problem. 

\smallskip
\noindent
{\bf AMS subject classifications:} 	35L71, 93B52, 93B07

\smallskip
\noindent
{\bf Keywords:} Non-linear wave equation, distributed control, feedback stabilisation, truncated observability inequality
\end{abstract}

\tableofcontents

\setcounter{section}{-1}

\section{Introduction}
\label{s0}
Let us consider the damped non-linear wave equation (NLW)
\begin{equation} \label{1}
\p_t^2u+\gamma\p_t u-\Delta u+f(u)=h(t,x), \quad x\in \Omega,
\end{equation}
where $\Omega\subset\R^3$ is a bounded domain with a smooth boundary~$\p \Omega$, $\gamma>0$ is a parameter, $h$ is a locally square-integrable function with range in~$L^2(\Omega)$, and $f:\R\to\R$ is a function satisfying some natural growth and regularity conditions ensuring the existence, uniqueness, and regularity of a solution.  Equation~\eqref{1} is supplemented with the Dirichlet boundary condition,
\begin{equation} \label{2}
u\bigr|_{\p \Omega}=0,
\end{equation}
and the initial conditions
\begin{equation} \label{3}
u(0,x)=u_0(x), \quad \p_tu(0,x)=u_1(x),
\end{equation}
where $u_0\in H_0^1(\Omega)$ and $u_1\in L^2(\Omega)$. It is well known that, even though problem~\eqref{1}, \eqref{2} is dissipative and possesses a global attractor (which is  finite-dimensional in the autonomous case), its flow is not locally stable, unless we impose very restrictive conditions on the nonlinear term~$f$. That is, the difference between two solutions with close initial data, in general, grows in time. The purpose of this paper is to show that any sufficiently regular solution of~\eqref{1}, \eqref{2} can be stabilised with the help of a finite-dimensional feedback control localised in space. Namely, instead of~\eqref{1}, consider the controlled equation
\begin{equation} \label{4}
\p_t^2u+\gamma\p_t u-\Delta u+f(u)=h(t,x)+\eta(t,x), \quad x\in \Omega,
\end{equation}
where $\eta$ is a control whose support in~$x$ contains a subset satisfying a  geometric condition. For a time-dependent function~$v$, we set 
$$
\varPhi_v(t)=\bigl[v(t),\dot v(t)\bigr], \quad
E_v(t)=\int_\Omega\bigl(|\dot v(t,x)|^2+|\nabla_x v(t,x)|^2\bigr)\,dx,
$$
where the dot over a function stands for its time derivative. The following theorem is an informal statement of the main result of this paper.

\begin{mt}
Let~$\hat u$ be the solution of~\eqref{1}--\eqref{3} with sufficiently regular initial data and let~$\omega\subset\Omega$ be a neighbourhood of the boundary~$\p\Omega$. Then there is a finite-dimensional subspace $\FF\subset H_0^1(\omega)$ and a family of continuous operators $K_{\hat u}(t):H_0^1\times L^2\to \FF$ such that, for any $[u_0,u_1]\in H_0^1\times L^2$ that is sufficiently close to $\varPhi_{\hat u}(0)$, problem \eqref{2}--\eqref{4} with $\eta(t)=K_{\hat u}(t)\varPhi_{u-\hat u}(t)$ has a unique solution~$u(t,x)$, which satisfies the inequality
\begin{equation} \label{5}
E_{u-\hat u}(t) \le Ce^{-\beta t}E_{u-\hat u}(0) , \quad t\ge0,
\end{equation}
where $C$ and~$\beta$ are positive numbers not depending on~$[u_0,u_1]$.
\end{mt}

We refer the reader to Section~\ref{s4} (see Theorem~\ref{t4.1}) for the exact formulation of our result. Before outlining the main idea of the proof of this theorem, we discuss some earlier results concerning the semi-linear wave equation with localised control. This problem  was studied in a number of works, and first local results were obtained by Fattorini~\cite{fattorini-1975} for rectangular domains and Chewning~\cite{chewning-1976} for a bounded interval. Zuazua~\cite{zuazua-1990,zuazua-1993} proved that the 1D wave equation with a nonlinear term~$f(u)$ growing at infinity no faster that $u(\log u)^2$ possesses the property of global exact controllability, provided that the control time is sufficiently large. These results were extended later to the multidimensional case, as well as to the case of nonlinearities with a faster growth and ``right'' sign at infinity~\cite{zhang-2000,LZ-2000}. Furthermore, the question of stabilisation to the zero solution was studied in~\cite{zuazua-cpde1990}. 

The application of methods of microlocal analysis enables one to get sharper results. In the linear case, the exact controllability and stabilisation to a stationary solution are established under conditions that are close to being necessary; e.g., see~\cite{BLR-1992,LR-1997}. Dehman, Lebeau, and Zuazua~\cite{DLZ-2003} proved the global exact controllability by a control supported in the neighbourhood of the boundary, provided that nonlinearity is subcritical and satisfies the inequalities $f(u)u\ge0$; see also~\cite{DL-2009} for a refinement of this result. Fu, Yong, and Zhang~\cite{FYZ-2007} established a similar result for the equations with variable coefficients and a nonlinearity~$f$ growing at infinity slower than~Ê$u(\log u)^{1/2}$. Coron and Tr\'elat~\cite{CT-2006} proved exact controllability of 1D nonlinear wave equation in a connected component of steady states. Laurent~\cite{laurent-2011} extended the result of~\cite{DLZ-2003} to the case of the nonlinear Klein--Gordon equation with a critical exponent. Very recently, Joly and Laurent~\cite{JL-2012} proved the global exact controllability of the NLW equation of a particular form, using a fine analysis of the dynamics on the attractor. In conclusion, let us mention that the problem of exact controllability and stabilisation for other type of semi-linear dispersive equations was investigated in a large number of works, and we refer the reader to the  papers~\cite{DGL-2006,laurent-2010} for an overview of the literature  in this direction. 

\medskip
To the best of our knowledge, the problem of stabilisation of a nonstationary solution~$\hat u$ by a finite-dimensional localised control was not studied earlier. Without going into detail, let us describe informally the main idea of our approach, which is based on the study of Eq.~\eqref{1} linearised around~$\hat u$. We thus consider the equation 
\begin{equation} \label{6}
\p_t^2u+\gamma\p_t u-\Delta u+b(t,x)u=\eta(t,x), \quad x\in \Omega,
\end{equation}
supplemented with the initial and boundary conditions~\eqref{2} and~\eqref{3}. We wish to find a finite-dimensional localised control depending on the initial conditions $[u_0,u_1]$  such that the energy~$E_u(t)$ goes to zero exponentially fast. Following a well-known idea coming from the theory of attractors (see~\cite{haraux-1985} and Section~II.6 in~\cite{BV1992}), we represent a solution of~\eqref{6}, \eqref{3} in the form $u=v+w$, where~$v$ is the solution of~\eqref{6} with $b\equiv\eta=0$  issued from~$[u_0,u_1]$. Then~$w$ satisfies the zero initial conditions and Eq.~\eqref{6} with~$\eta$ replaced by $\eta-bv$. Let us note that~$v$ goes to zero in the energy space exponentially fast and that~$w$ has better regularity properties than~$v$. It follows, in particular, that the decay of a sufficiently large finite-dimensional projection of~$w$ will result in exponential stabilisation of~$u$. Combining this with the general scheme used in~\cite{BRS-2011} and a new observability inequality established in Section~\ref{s3}, we construct a finite-dimensional localised control~$\eta$ which squeezes to zero the energy norm of~$w$ and that of~$u$. A standard technique enables one to prove the latter property can be achieved by a feedback control. We refer the reader to Section~\ref{s2} for more details. 

\smallskip
The paper is organised as follows. In Section~\ref{s1}, we recall some well-known results on the Cauchy problem for semi-linear wave equation and establish the existence and uniqueness of a solution for the linear problem with low-regularity data. Section~\ref{s2} is devoted to the stabilisation to zero for the linearised equation by a finite-dimensional localised control. The key tool for proving this result is the truncated observability inequality established in Section~\ref{s3}. Finally, the main result on local stabilisation for the non-linear problem is presented in Section~\ref{s4}. 

\medskip
{\bf Acknowledgements}. This work was initiated when the third author was visiting The University of Monastir (Tunisia) in April of 2011. He thanks the institute for hospitality. 
The research of TD was supported by the ERC starting grant {\it DISPEQ\/}, the ERC advanced grant {\it BLOWDISOL\/} (No.~291214), and the ANR JCJC grant {\it  SchEq\/}.  
The research of AS supported by the Royal Society--CNRS grant {\it Long time behavior of solutions for stochastic Navier--Stokes equations\/} (No.~YFDRN93583) and the ANR grant {\it STOSYMAP\/} (No.~ANR 2011 BS01 015 01). 

\subsection*{Notation}
Let $J\subset\R$ be a closed interval, let~$\Omega\subset\R^d$ be a bounded domain with $C^\infty$ boundary~$\p \Omega$, and let~$X$ and~$Y$ be  Banach spaces. We shall use the following functional spaces.

\medskip
\noindent
$C_0^\infty(\Omega)$ is the space of infinitely smooth functions $f:\Omega\to\R$ with compact support. 

\smallskip
\noindent
$L^p=L^p(\Omega)$ is the usual Lebesgue space of measurable functions $f:\Omega\to\R$ such that
$$
\|f\|_{L^p}^p:=\int_\Omega|f(x)|^pdx<\infty.
$$
When $p=2$, this norm is generated by the $L^2$-scalar product~$(\cdot,\cdot)$ and is denoted by~$\|\cdot\|$. 

\smallskip
\noindent
$H^s=H^s(\Omega)$ is the Sobolev space of order~$s$ with the standard norm~$\|\cdot\|_s$. 

\smallskip
\noindent
$H_0^s=H_0^s(\Omega)$ is the closure of $C_0^\infty(\Omega)$ in~$H^s$. It is well known that $H_0^s=H^s$ for $s<\frac12$ and that~$H^s$ with $s\le0$ is the dual of~$H_0^{-s}$ with respect to~$(\cdot,\cdot)$. 

\smallskip
\noindent
$C(J,X)$ is the space of continuous functions $f:J\to X$ with the topology of uniform convergence on bounded intervals. Similarly, $C^k(J,X)$ is the space of~$k$ time continuously differentiable functions $f\in C(J,X)$. 

\smallskip
\noindent
$L^p(J,X)$ is the space of Bochner measurable functions $f:J\to X$ with a finite norm
$$
\|f\|_{L^p(J,X)}=\biggl(\int_J\|f(t)\|_X^pdt\biggr)^{1/p}.
$$
In the case $p=\infty$, this norm should be replaced by $\esssup_{t\in J}\|f(t)\|_X$. 

\smallskip
\noindent
$W^{k,p}(J,X)$ is the space of functions $f\in L^p(J,X)$ such that $\p_t^j f\in L^p(J,X)$ for $j=0,\dots,k$. 

\smallskip
\noindent
$\LL(X,Y)$ denotes the space of continuous linear operators from~$X$ to~$Y$ with the usual operator norm~$\|\cdot\|_{\LL(X,Y)}$. When the choice of~$X$ and~$Y$ is clear, we simply write~$\|\cdot\|_\LL$. 

\medskip
\noindent
Given a function of the time variable $v(t)$, we write $\varPhi_v(t)=[v(t),\dot v(t)]$, where the dot stands for the time derivative. 

\smallskip
\noindent
We denote by $C$ unessential numbers (which may vary from line to line) and by~$M_i(a_1,\dots,a_n)$  positive constants depending on the parameters $a_1,\dots,a_n$. 

\smallskip
\noindent
We write $J_T=[0,T]$, $J_T(s)=[s,s+T]$, $\R_+=[0,+\infty)$, and $R_s=[s,+\infty)$. 

\section{Preliminaries}
\label{s1}
In this section, we first recall a well-known result on the existence, uniqueness, and regularity of a solution for~\eqref{1}--\eqref{3}. We next turn to the linearised problem, for which we prove the well-posedness in a space of functions of low regularity. Finally, we derive some commutator estimates used in what follows. 

\subsection{Cauchy problem for a semi-linear wave equation}
\label{s1.1}
Let us consider the equation
\begin{equation}
\p_t^2u+\gamma\p_t u-\Delta u+f(u)=g(t,x), \quad x\in \Omega,\label{1.1}
\end{equation}
where $\Omega\subset\R^3$ is a bounded domain with the boundary $\p\Omega\in C^\infty$, $\gamma>0$ is a parameter, $g$ is a locally square-integrable function of time with range in~$L^2(\Omega)$,  and~$f\in C^1(\R)$ is a function vanishing at $u=0$ and satisfying the inequalities
\begin{equation} \label{1.2}
|f'(u)|\le C(1+|u|^2), \quad F(u):=\int_0^uf(v)\,dv\ge -C, \quad u\in\R. 
\end{equation}
Equation~\eqref{1.1} is supplemented with the initial and boundary conditions~\eqref{2} and~\eqref{3}. A proof of the following  result on the well-posedness of the initial-boundary value problem for~\eqref{1.1} and regularity of solutions can be found in Chapter~1 of~\cite{lions1969} and Section~1.8 of~\cite{BV1992}. 

\begin{proposition} \label{p1.1}
Under the above-mentioned hypotheses, for any $u_0\in H_0^1(\Omega)$, $u_1\in L^2(\Omega)$, and $g\in L_{\rm loc}^2(\R_+,L^2)$, problem~\eqref{1.1}, \eqref{2}, \eqref{3} has a unique solution 
\begin{equation} \label{1.3}
u\in C(\R_+,H_0^1)\cap C^1(\R_+,L^2)\cap W_{\rm loc}^{2,2}(\R_+,H^{-1}). 
\end{equation}
Moreover, if in addition $u_0\in H^2(\Omega)$, $u_1\in H_0^1(\Omega)$, and $\p_tg\in L_{\rm loc}^2(\R_+,L^2)$, then 
\begin{equation} \label{1.4}
u\in C(\R_+,H^2)\cap C^1(\R_+,H_0^1)\cap W_{\rm loc}^{2,2}(\R_+,L^2). 
\end{equation}
\end{proposition}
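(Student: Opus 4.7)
The plan is to establish existence via a Galerkin approximation scheme combined with the natural energy identity, deduce uniqueness from a difference estimate, and obtain the higher regularity~\eqref{1.4} by differentiating the equation in time and applying elliptic regularity. Let $\{e_k\}$ be an orthonormal basis of $L^2$ of eigenfunctions of the Dirichlet Laplacian, and consider the Galerkin approximations $u_n(t)=\sum_{k=1}^n c_k^n(t)\,e_k$ obtained by projecting~\eqref{1.1} onto the span of the first $n$ eigenfunctions. These satisfy a locally Lipschitz ODE and thus exist locally in time; multiplying by $\p_t u_n$ and integrating gives the energy identity
$$
\frac{d}{dt}\EE_n(t)+\gamma\|\p_tu_n\|^2=(g,\p_tu_n),\qquad \EE_n=\tfrac12\|\p_tu_n\|^2+\tfrac12\|\nabla u_n\|^2+\int_\Omega F(u_n)\,dx.
$$
The lower bound $F\ge -C$ from~\eqref{1.2} together with Gronwall yield uniform bounds on $u_n$ in $L^\infty_{\rm loc}(\R_+;H_0^1)\cap W^{1,\infty}_{\rm loc}(\R_+;L^2)$, extending the approximations globally in time.

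Passing to the limit, weak-$*$ compactness extracts a candidate solution $u$, and the Aubin--Lions lemma provides strong convergence of $u_n$ in $L^2_{\rm loc}(\R_+;L^p)$ for every $p<6$. Combined with the uniform $L^\infty(H_0^1)\hookrightarrow L^\infty(L^6)$ bound and the growth estimate $|f(u)|\le C(|u|+|u|^3)$ from~\eqref{1.2}, this suffices to identify the weak limit of $f(u_n)$ with $f(u)$ and produce a solution in the class~\eqref{1.3}. For uniqueness, let $w=u-\tilde u$ be the difference of two solutions with the same data; then $w$ satisfies a linear damped wave equation with source $-(f(u)-f(\tilde u))$. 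Applying the energy estimate to $w$ and controlling the source via $\bigl|\int f'(\cdot)w\,\p_tw\bigr|\le\|f'(\cdot)\|_{L^3}\|w\|_{L^6}\|\p_tw\|$ together with the pointwise bound $|f'(\cdot)|\le C(1+|u|^2+|\tilde u|^2)$ and Sobolev embedding gives a differential inequality that closes by Gronwall.

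For the higher regularity~\eqref{1.4}, differentiate~\eqref{1.1} in time and set $v=\p_tu$; then $v$ solves the linear damped wave equation $\p_t^2v+\gamma\p_tv-\Delta v+f'(u)v=\p_tg$, with initial data $v(0)=u_1\in H_0^1$ and $\p_tv(0)\in L^2$ recovered from the equation at $t=0$ using $u_0\in H^2$ and $u_1\in H_0^1$. Performing energy estimates on this linear equation at the Galerkin level (to avoid regularity issues), and using that the potential $f'(u)$ is uniformly bounded in $L^3$ thanks to the $L^6$ bound on $u$, yields $v\in C(\R_+;H_0^1)\cap C^1(\R_+;L^2)$. Rewriting~\eqref{1.1} as $-\Delta u=g-f(u)-\gamma\p_tu-\p_t^2u$ with right-hand side now in $C(\R_+;L^2)$ (the term $f(u)$ is controlled since $H^2\hookrightarrow L^\infty$ in three dimensions), elliptic regularity delivers $u\in C(\R_+;H^2)$. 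The main obstacle throughout is the critical 3D cubic growth of~$f$: both the identification $f(u_n)\to f(u)$ in the Galerkin limit and the closure of the uniqueness estimate rely crucially on the Sobolev embedding $H_0^1\hookrightarrow L^6$, which is exactly tuned to handle cubic nonlinearities and would fail under a strictly supercritical growth rate.
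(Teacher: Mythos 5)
The paper offers no proof of Proposition~\ref{p1.1}, referring instead to Chapter~1 of~\cite{lions1969} and Section~1.8 of~\cite{BV1992}; your Galerkin--energy argument, with Aubin--Lions compactness to identify the limit of $f(u_n)$, the $L^3$--$L^6$--$L^2$ H\"older estimate for uniqueness, and time-differentiation plus elliptic regularity for~\eqref{1.4}, is exactly the classical proof found in those references and is correct. The only cosmetic slip is invoking $H^2\hookrightarrow L^\infty$ to control $f(u)$ in $L^2$ (circular at that stage, since $u\in C(\R_+,H^2)$ is what is being proved), whereas $u\in C(\R_+,H_0^1)\hookrightarrow C(\R_+,L^6)$ together with the cubic growth bound already gives $f(u)\in C(\R_+,L^2)$.
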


We now formulate a result on the time boundedness of solution for~\eqref{1.1} under some additional assumptions on~$f$. Namely, let us assume that $f\in C^2(\R)$ is such that 
\begin{equation} \label{1.5}
f(u)u\ge c\,F(u)-C, \quad f'(u)\ge -C,\quad |f''(u)|\le C(1+|u|),\quad u\in\R,
\end{equation}
where $C$ and $c$ are positive constants.  Note that these conditions are satisfied for polynomials of degree~$3$ with positive leading coefficient and, more generally, for $C^2$-smooth functions behaving at infinity as $c|u|^{\rho-1}u$ with $c>0$ and $\rho\in[1,3]$. The following result is established by Zelik~\cite{zelik-2004}.

\begin{proposition} \label{p1.2}
Let us assume that~$f\in C^2(\R)$ satisfies~\eqref{1.2} and~\eqref{1.5} and let $g\in W^{1,\infty}(\R_+,L^2)$. 
Then, for any $u_0\in H^2(\Omega)\cap H_0^1(\Omega)$ and $u_1\in H_0^1(\Omega)$, the solution $u(t,x)$ of~\eqref{1.1}, \eqref{2}, \eqref{3} satisfies the inequality
\begin{equation} \label{1.7}
\|u(t)\|_2+\|\dot u(t)\|_1\le M_1\bigl(\|u_0\|_2,\|u_1\|_1,G\bigr), \quad t\ge0,
\end{equation}
where we set
\begin{equation*} \label{1.6}
G:=\esssup_{t\ge 0}\bigl(\|g(t,\cdot)\|+\|\p_tg(t,\cdot)\|\bigr)<\infty.
\end{equation*}
\end{proposition}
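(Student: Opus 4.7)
The plan is to follow the standard three-step dissipativity scheme for damped wave equations in the critical case, as developed by Zelik. The three stages propagate regularity upward from $H^1\times L^2$ to $H^2\times H^1$.

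First, I would establish a uniform bound in the energy space. Multiply \eqref{1.1} by $\dot u+\varepsilon u$ for small $\varepsilon>0$, integrate over $\Omega$, and form the Lyapunov functional
$$
\Lambda_0(t)=\tfrac12\|\dot u\|^2+\tfrac12\|\nabla u\|^2+\int_\Omega F(u)\,dx+\varepsilon(\dot u,u)+\tfrac{\varepsilon\gamma}{2}\|u\|^2.
$$
The second bound in \eqref{1.2} guarantees $\Lambda_0\ge c_1(\|\dot u\|^2+\|u\|_1^2)-C$, while the first inequality in \eqref{1.5} ($f(u)u\ge cF(u)-C$) is exactly what is needed to produce a dissipation term $-\alpha\Lambda_0$ in the differential inequality $\dot\Lambda_0+\alpha\Lambda_0\le \beta+C\|g\|^2$. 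Grönwall then yields the uniform bound $\|u(t)\|_1+\|\dot u(t)\|\le M(\|u_0\|_1,\|u_1\|,G)$ for all $t\ge0$.

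Second, I would differentiate \eqref{1.1} in $t$ and set $v=\dot u$, so that
$$
\partial_t^2 v+\gamma\partial_t v-\Delta v+f'(u)\,v=\dot g.
$$
Apply the same Lyapunov recipe to this linear equation with variable potential $f'(u)$, now building
$$
\Lambda_1(t)=\tfrac12\|\dot v\|^2+\tfrac12\|\nabla v\|^2+\tfrac12\!\int_\Omega f'(u)v^2\,dx+\varepsilon(\dot v,v)+\tfrac{\varepsilon\gamma}{2}\|v\|^2.
$$
The assumption $f'(u)\ge -C$ in \eqref{1.5} bounds $\int f'(u)v^2\,dx$ from below by $-C\|v\|^2$, which is controlled by step~1, so $\Lambda_1$ is coercive modulo a constant. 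Differentiating produces a commutator term $\frac12\int f''(u)\,\dot u\,v^2\,dx=\frac12\int f''(u)\,v^3\,dx$; this is the only obstacle, and it is handled using $|f''(u)|\le C(1+|u|)$ from \eqref{1.5} together with $H^1\hookrightarrow L^6$ in three dimensions and the Gagliardo--Nirenberg interpolation $\|v\|_{L^{18/5}}^3\le C\|v\|^{\,}\|v\|_1^2$. Since $\|v\|=\|\dot u\|$ and $\|u\|_{L^6}$ are already bounded by step~1, one gets
$$
\Bigl|\tfrac12\!\int f''(u)v^3\,dx\Bigr|\le \delta\|\nabla v\|^2+C_\delta,
$$
which is absorbed by the dissipation. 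Together with the forcing bound $\|\dot g(t)\|\le G$, this closes a differential inequality $\dot\Lambda_1+\alpha\Lambda_1\le C$, yielding a uniform bound on $\|\dot v(t)\|+\|v(t)\|_1=\|\ddot u(t)\|+\|\dot u(t)\|_1$.

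Third, I would recover $\|u(t)\|_2$ by elliptic regularity: rewriting \eqref{1.1} as
$$
-\Delta u=g-\ddot u-\gamma\dot u-f(u),
$$
every term on the right is uniformly bounded in $L^2$, since $\|f(u)\|\le C(1+\|u\|_{L^6}^3)$ is controlled by step~1 and the remaining terms by step~2. With the Dirichlet condition \eqref{2}, standard elliptic regularity for $-\Delta$ gives $\|u(t)\|_2\le M$, and combining with the step~2 bound on $\|\dot u\|_1$ produces \eqref{1.7}.

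The real difficulty is step~2: the critical nonlinear growth $|f'(u)|\lesssim |u|^2$ makes the equation for $v$ supercritical-looking, and only the companion condition $|f''(u)|\le C(1+|u|)$ together with the already-known uniform $H^1$ bound on $u$ and $L^2$ bound on $v$ allows the trilinear commutator $\int f''(u)v^3\,dx$ to be absorbed by the dissipation. This borderline balance is exactly the reason why the growth exponent $\rho=3$ is critical and why the pointwise condition on $f''$ in \eqref{1.5} cannot be weakened.
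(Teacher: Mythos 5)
The paper offers no internal proof of this proposition: it is quoted verbatim from Zelik and justified by the citation \cite{zelik-2004}, so the only question is whether your argument actually closes. Steps 1 and 3 are fine, but Step 2 has a genuine gap at exactly the point you single out as ``the real difficulty''. The dangerous part of the trilinear term is $\int_\Omega |u|\,|v|^3\,dx$ with $v=\dot u$ (coming from $|f''(u)|\le C(1+|u|)$; the $\int|v|^3$ part is harmlessly subcritical). Your own chain of estimates gives
\[
\int_\Omega|u|\,|v|^3\,dx\;\le\;\|u\|_{L^6}\,\|v\|_{L^{18/5}}^3\;\le\;C\,\|u\|_{L^6}\,\|v\|_{L^2}\,\|v\|_{1}^2\;\le\;C\,M^2\,\|v\|_{1}^2,
\]
where $M$ is the Step-1 bound on $\|u\|_1+\|\dot u\|$. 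The exponent on $\|v\|_1$ is already exactly $2$, so there is no room to apply Young's inequality and trade the prefactor for an arbitrarily small one: the inequality you assert, $\bigl|\tfrac12\int f''(u)v^3\,dx\bigr|\le\delta\|\nabla v\|^2+C_\delta$, is not available. The constant $CM^2$ is bounded but not small, and it cannot be absorbed by the dissipation rate of $\Lambda_1$, which is of order $\min(\gamma,\e)$ and independent of the data. What you actually obtain is $\dot\Lambda_1\le (CM^2-\alpha)\Lambda_1+C$, which yields only exponentially growing (i.e., finite-time) bounds on $\|\dot u\|_1+\|\ddot u\|$, not the uniform-in-time bound required for \eqref{1.7}.

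This is not a fixable detail within your scheme: for growth $\rho<3$ the Gagliardo--Nirenberg exponent on $\|v\|_1$ drops strictly below $2$ and your absorption works, but at the critical cubic rate $\rho=3$ the balance is exact and the single Lyapunov estimate on $v=\dot u$ fails. That is precisely why the uniform $H^2\times H_0^1$ bound in the critical case is a theorem of \cite{zelik-2004}, whose proof relies on additional structure (a splitting of the solution into an exponentially decaying part and a more regular remainder, combined with a bootstrap through intermediate regularity classes) rather than on a direct differentiated energy estimate. Your last paragraph correctly diagnoses that $\rho=3$ is borderline, but the correct conclusion from that observation is that the absorption step breaks down, not that it barely succeeds.
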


\subsection{Cauchy problem for the wave equation with low-regu\-larity data}
\label{s1.2}
In this and the next subsections, we assume that the space dimension~$d\ge1$ is arbitrary, even though the results obtained here will be used only for $d=3$. We study the linearised problem
\begin{align} 
\ddot v+\gamma\dot v-\Delta v+b(t,x)v&=\eta(t,x),\quad x\in \Omega,\label{2.1}\\
v\bigr|_{\p \Omega}&=0,\label{2.2}\\
v(0,x)=v_0(x), \quad \dot v(0,x)&=v_1(x), 
\label{1.12}
\end{align}
where $b$ and~$\eta$ are functions of low regularity, and  $\gamma\in\R$. Namely, let~$\{e_j\}$ be the complete set of $L^2$-normalised eigenvectors for the Dirichlet Laplacian in~$\Omega$ (denoted by~$-\Delta$) and let~$\{\lambda_j\}$ be the corresponding eigenvalues numbered in an increasing order. We define the scale of spaces associated with~$-\Delta$ by the relation
\begin{align}
H_D^s=H_D^s(\Omega)=\Bigl\{f\in L^2(\Omega):\sum_{j\ge1}(f,e_j)^2\lambda_j^s<\infty\Bigr\}\quad\mbox{for $s\ge0$}
\end{align}
and denote by~$H_D^{-s}=H_D^{-s}(\Omega)$ the dual of~$H_D^s$ with respect to the $L^2$ scalar product. It is well known that (see~\cite{fujiwara-1967})
\begin{equation} \label{1.14}
H_D^s=H_0^s\quad\mbox{for $\frac12<s<\frac12$}, \qquad
H_D^s=H^s\quad\mbox{for $-\frac32<s<\frac12$}. 
\end{equation}

The wave propagator for~\eqref{2.1}, \eqref{2.2} is well defined (with the help of the eigenfunction expansion) when $b\equiv\eta\equiv0$. In this case, for any initial data $[v_0,v_1]\in \HH_D^s:=H_D^s\times H_D^{s-1}$ there is a unique solution 
$$
v\in C(\R,H_D^s)\cap C^1(\R,H_D^{s-1}),
$$
which satisfies the inequality
\begin{equation} \label{1.15}
\|\varPhi_v(t)\|_{\HH_D^s}\le e^{c|t|}\|\varPhi_v(0)\|_{\HH_D^s}
\quad\mbox{for $t\in\R$},
\end{equation}
where $c\ge0$ depends only on~$\gamma$. We denote by $S(t):\HH_D^s\to\HH_D^s$ the operator taking~$[v_0,v_1]$ to~$\varPhi_v(t)$ and write $S(t)=[S_0(t),S_1(t)]$, so that~$S_0(t)$ is a continuous operator from~$\HH_D^s$ to~$H_D^s$ for any $s\in\R$. 

Let us define $\HH^s=H_0^s\times H_0^{s-1}$ and note that, in view of~\eqref{1.14}, we have $\HH^s=\HH^s_D$ for $s\in(-\frac12,\frac12)$. The proof of the following result is rather standard and is based on the Duhamel representation, the Banach fixed point theorem, and an estimate for the Sobolev of the product of two functions.

\begin{proposition} \label{p1.3}
Let $r$ and $T$ be positive numbers, let $J_T=[0,T]$, and let $b\in L^\infty(J_T\times\Omega)\cap L^\infty(J_T,H^r)$. Then there is $\sigma_0(r)>0$ such that for any $\sigma\in[0,\sigma_0(r)]$, $[v_0,v_1]\in\HH^{-\sigma}$, and $\eta\in L^1(J_T,H^{-\sigma-1})$ problem~\eqref{2.1}--\eqref{1.12} has a unique solution
$$
v\in X_T^\sigma:=C(J_T,H^{-\sigma})\cap C^1(J_T,H^{-\sigma-1}). 
$$
Moreover, there is $M_2=M_2\bigl(T,\|b\|_{L^\infty(J_T,L^\infty\cap H^r)}\bigr)$ such that
\begin{equation} \label{1.16}
\sup_{t\in J_T}\|\varPhi_v(t)\|_{\HH^{-\sigma}}
\le M_2\Bigl(\bigl\|[v_0,v_1]\bigr\|_{\HH^{-\sigma}}
+\bigl\|\eta\bigr\|_{L^1(J_T,H^{-\sigma-1})}\Bigr).
\end{equation}
\end{proposition}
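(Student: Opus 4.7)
My approach is to set up the problem as a Banach fixed-point for the Duhamel integral equation, with the main analytic ingredient being a multiplier estimate for~$b$. Viewing the term $bv$ as part of the forcing, any candidate solution $v\in X_T^\sigma$ must satisfy
$$
\varPhi_v(t) = S(t)[v_0,v_1] + \int_0^t S(t-s)\bigl[0,\eta(s)-b(s)v(s)\bigr]\,ds.
$$
For $S(t)$ to act on $\HH^{-\sigma}$ via the identification $\HH^{-\sigma}=\HH_D^{-\sigma}$ from~\eqref{1.14}, I impose $\sigma_0(r)<1/2$ from the outset, so that the free-propagator bound~\eqref{1.15} applies.

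The core technical step is to prove, for some $\sigma_0(r)>0$ and all $\sigma\in[0,\sigma_0(r)]$, the product estimate
$$
\|b\phi\|_{H^{-\sigma-1}}\le C\,\|b\|_{L^\infty\cap H^r}\,\|\phi\|_{H^{-\sigma}}, \qquad \phi\in H^{-\sigma}.
$$
Dualising, and using that $b$ is real-valued, this reduces to showing that $\psi\mapsto b\psi$ sends $H^{\sigma+1}$ into $H^\sigma$ with norm controlled by $\|b\|_{L^\infty\cap H^r}$. At $\sigma=0$ the bound $\|b\psi\|_{L^2}\le\|b\|_{L^\infty}\|\psi\|_{L^2}$ is immediate; for small $\sigma>0$ I would combine the $L^\infty$ bound on $b$ with a Sobolev product (or paraproduct) inequality that exploits both the $H^r$ regularity of~$b$ and the one-derivative gain on~$\psi$, obtaining an admissible range of~$\sigma$ that defines~$\sigma_0(r)$.

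Once the product estimate is in hand, the right-hand side of the Duhamel formula defines a map $\Lambda$ on $X_T^\sigma$; combining~\eqref{1.15} with the product estimate yields on a short interval $J_{T_1}\subset J_T$
$$
\|\Lambda v_1-\Lambda v_2\|_{X_{T_1}^\sigma}\le K\,T_1\,\|v_1-v_2\|_{X_{T_1}^\sigma},
$$
where $K$ depends only on $T$ and $\|b\|_{L^\infty(J_T,L^\infty\cap H^r)}$. Choosing $T_1$ so that $KT_1<1$ gives a contraction and a unique fixed point; iterating over finitely many subintervals covers~$J_T$ and yields uniqueness on the whole interval. The bound~\eqref{1.16} then follows by applying~\eqref{1.15} and the product estimate to the Duhamel identity and closing with Gronwall's inequality.

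\textbf{Main obstacle.} The substantive difficulty is the product estimate. Since $b$ lies only in $L^\infty\cap H^r$ with $r>0$ possibly very small (in particular, possibly below the Sobolev-algebra threshold $d/2$), one cannot directly invoke Kato--Ponce-type estimates on~$H^r$. The $L^\infty$ bound must be combined delicately with the $H^r$ regularity and the one-derivative dual gain, and the resulting constraints force $\sigma_0(r)$ to be strictly smaller than a threshold determined by~$r$ and the ambient dimension.
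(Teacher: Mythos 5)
Your proposal is correct and follows essentially the same route as the paper: rewrite the problem as a Duhamel integral equation, run a contraction argument on a short time interval of length depending only on $\|b\|_{L^\infty(J_T,L^\infty\cap H^r)}$, iterate, and reduce everything to a multiplier estimate for $b$ acting between negative Sobolev spaces. The product estimate you defer is exactly the paper's Lemma~\ref{l1.4}, obtained by interpolating the multiplication operator between the trivial bound $\|af\|\le\|a\|_{L^\infty}\|f\|$ and a tame estimate $\|af\|_r\le C\|a\|_{L^\infty\cap H^r}\|f\|_{1+d/2}$ and then dualising, which yields $\|bv\|_{-p\sigma}\le C\|b\|_{L^\infty\cap H^r}\|v\|_{-\sigma}$ with $p=\tfrac{d+2}{2r}$ and hence the admissible range $\sigma(p-1)\le 1$ (together with $\sigma<\tfrac12$) that defines $\sigma_0(r)$.
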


Note that inequality~\eqref{1.16} is true for $\sigma=-1$, provided that $b\in L^\infty(J_T\times\Omega)$ for any $T>0$; this is a simple consequence of the standard energy estimate for the wave equation and the Gronwall inequality. 

\begin{proof}[Proof of Proposition~\ref{p1.3}]
We need to construct a solution of the integral equation
\begin{equation}
 \label{Duhamel_b}
v(t)=S_0(t)[v_0,v_1]-\int_0^t S_0(t-s)\bigl[0,b(s)v(s)\bigr]\,ds
+\int_0^t S_0(t-s)\bigl[0,\eta(s)\bigr]\,ds, 
\end{equation}
where $t\in J_T$. We shall prove the existence of a solution on small time interval $J_\tau=[0,\tau]$ whose length does not depend on the size of the initial data and the right-hand side. The global existence will then follow by iteration. 

\smallskip
{\it Step~1. Bound on the Duhamel term}. 
Let us set 
\begin{equation*}
Qg(t)=\int_0^t S_0(t-s)[0,g(s)]\,ds.
\end{equation*} 
We first show that if $g\in L^1(J_\tau,H^{-1-\sigma})$, then 
\begin{equation} \label{bdt}
Qg\in X_{\tau}^\sigma, \quad \|Qg\|_{X_{\tau}^\sigma}\leq C_1\|g\|_{L^1(J_\tau,H^{-1-\sigma})},
\end{equation}
where~$C_1$ does not depend on~$g$. 
Indeed, if $0\leq \sigma<\frac 12$, then $H^{-\sigma-1}=H_D^{-\sigma-1}$ and thus the mapping $s\mapsto \{S_0(t-s)[0,g(s)],t\in J_\tau\}$ belongs to the space $L^1(J_\tau,Y_\tau^\sigma)$, where $Y_\tau^\sigma=C(J_\tau,H^{-\sigma})$. 
Furthermore, by~\eqref{1.15}, we have 
$$ 
\|S_0(t-s)[0,g(s)]\|_{-\sigma}\le e^{c\tau}\|g(s)\|_{-\sigma-1}
\quad\mbox{for all $t,s\in J_\tau$},
$$
It follows that $Qg\in C(J_\tau,H^{-\sigma})$ and 
$$
\|Qg(t)\|_{-\sigma}\leq C\,\|g\|_{L^1(J_\tau,H^{-\sigma-1})}\quad
\mbox{for $t\in J_\tau$}.
$$
Using the relation
$$
\frac{\partial}{\partial t} S_0(t-s)[0,g(s)]=S_0(t-s)[g(s),0],
$$
we see that $\p_t(Qg)\in C(J_\tau,H^{-\sigma-1})$ and 
$$
\|\partial_t Qg(t)\|_{-\sigma-1}\leq C\,\|g\|_{L^1(J_\tau,H^{-\sigma-1})}, 
\quad t\in J_\tau.
$$
This completes the proof of~\eqref{bdt}.

\smallskip
{\it Step~2. Fixed point argument}. 
We shall need the following lemma, whose proof is given at the end of this subsection. 

\begin{lemma} \label{l1.4}
Let $r\in[0,1+d/2]$, let $\Omega\subset\R^d$ be a bounded domain with smooth boundary, and let $a\in L^\infty\cap H^r$. Then, for any $s\in[0,r]$, we have
\begin{align} 
\|af\|_s&\le C\,\|a\|_{L^\infty\cap H^r}\|f\|_{ps}\quad \mbox{for $f\in H^{ps}$},
\label{1.17}\\
\|af\|_{-sp}&\le C\,\|a\|_{L^\infty\cap H^r}\|f\|_{-s}\quad \mbox{for $f\in H^{-s}$},
\label{1.18}
\end{align}
where $p=\frac{d+2}{2r}$, and $C>0$ depends only on~$\Omega$. 
\end{lemma}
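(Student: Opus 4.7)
My plan is to reduce inequality~\eqref{1.17} to the two endpoint cases $s=0$ and $s=r$, interpolate between them, and then deduce~\eqref{1.18} by duality.

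At $s=0$ the bound $\|af\|_0\le\|a\|_{L^\infty}\|f\|_0$ is just H\"older, and since $p\ge 1$ (because $r\le 1+d/2$) the right-hand side coincides with $\|f\|_{ps}$ when $s=0$. The nontrivial endpoint is $s=r$, for which we need
$$
\|af\|_r\le C\,\|a\|_{L^\infty\cap H^r}\|f\|_{(d+2)/2}.
$$
The strategy here is to extend $a$ and $f$ from $\Omega$ to~$\R^d$ via a continuous linear extension operator (which exists because $\p\Omega$ is smooth and is bounded simultaneously in $L^\infty$ and~$H^r$), apply the standard Moser/fractional Leibniz inequality
$$
\|\tilde a\tilde f\|_{H^r(\R^d)}\le C\bigl(\|\tilde a\|_{L^\infty(\R^d)}\|\tilde f\|_{H^r(\R^d)}+\|\tilde a\|_{H^r(\R^d)}\|\tilde f\|_{L^\infty(\R^d)}\bigr)
$$
on $\R^d$, and restrict back to~$\Omega$. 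The factors involving~$f$ are then controlled by $\|f\|_{(d+2)/2}$ via the Sobolev embeddings $H^{(d+2)/2}\hookrightarrow L^\infty$ (valid since $(d+2)/2>d/2$) and $H^{(d+2)/2}\hookrightarrow H^r$ (valid since $r\le (d+2)/2$, which is exactly the hypothesis on~$r$).

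With the two endpoints in hand, complex interpolation on the Sobolev scale over the smooth bounded domain~$\Omega$, using the standard identity $[L^2,H^\sigma(\Omega)]_\theta=H^{\theta\sigma}(\Omega)$, yields boundedness of $f\mapsto af$ from $H^{ps}$ to~$H^s$ with norm $\le C\|a\|_{L^\infty\cap H^r}$ for every $\theta=s/r\in[0,1]$, which is~\eqref{1.17}. For~\eqref{1.18} I would transpose: the multiplication operator $f\mapsto af$ is self-adjoint for the $L^2$ pairing, and $H^{-\sigma}$ is identified with the dual of $H_0^\sigma$ in the paper's convention, so taking adjoints of $f\mapsto af:H^{ps}\to H^s$ produces the same operator bounded from~$H^{-s}$ to~$H^{-ps}$ with the same norm.

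The main technical step is the endpoint at $s=r$; the interpolation and duality steps are essentially formal once it is in place. The ceiling $r\le 1+d/2$ enters precisely through the inclusion $H^{(d+2)/2}\hookrightarrow H^r$ needed to close that endpoint and would otherwise be violated.
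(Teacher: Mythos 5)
Your proposal is correct and follows essentially the same route as the paper: the trivial $L^2$ endpoint, a product estimate at $s=r$ controlled by $\|f\|_{1+d/2}=\|f\|_{pr}$, interpolation between the two endpoints, and duality (via the observation that $p\ge1$ so multiplication preserves the $H_0$ scale) to get \eqref{1.18}. The only cosmetic difference is how the $s=r$ endpoint is realised --- you extend to $\R^d$ and invoke the fractional Leibniz/Moser estimate plus the embeddings $H^{1+d/2}\hookrightarrow L^\infty$ and $H^{1+d/2}\hookrightarrow H^r$, whereas the paper uses the Banach-algebra property of $H^s$ for $s>d/2$ together with interpolation in $a$; both give the same bound.
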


For $v\in X_{\tau}^\sigma$, we define
\begin{equation} \label{def_Pv}
Pv(t)=\int_0^{t} S_0(t-s)\bigl[0,b(s)v(s)\bigr]\,ds,\quad t\in J_\tau.
\end{equation} 
Using~\eqref{1.18}, one can find a number $\sigma_0(r)>0$ such that, if $0<\sigma\le\sigma_0(r)$, then $bv \in L^{\infty}(J_\tau,H^{-\sigma-1})$ and
$$ 
\|bv\|_{L^{\infty}(J_\tau,H^{-\sigma-1})}\leq 
C\,\|b\|_{L^{\infty}(J_\tau,L^{\infty}\cap H^r)}\|v\|_{X_{\tau}^\sigma}.
$$
Hence, in view of~\eqref{bdt}, we have 
\begin{equation} \label{bound_Pv}
\|Pv\|_{X_{\tau}^\sigma}\leq 
C\,\tau\left\|b\right\|_{L^{\infty}(J_\tau,H^r\cap L^{\infty})}\|v\|_{X_{\tau}^\sigma}.
\end{equation} 
Now note that~\eqref{Duhamel_b} holds for all $t\in J_\tau$ and a function $v\in X_\tau^\sigma$ if and only if
$$ 
(\mathrm{Id}+P)u=S(t)[v_0,v_1]+Q\eta,\quad u\in X_{\tau}^\sigma.
$$
Choosing~$\tau$ so small that 
$$
C\tau\left\|b\right\|_{L^{\infty}(0,\tau;H^r\cap L^{\infty})}\leq \frac{1}{2},
$$
we see from~\eqref{bound_Pv} that $\mathrm{Id}+P$ is invertible in $X_{\tau}^\sigma$ and 
\begin{equation} \label{1.22}
\left\|(\mathrm{Id}+P)^{-1}\right\|_{\LL(X_{\tau}^\sigma,X_{\tau}^\sigma)}\le2.
\end{equation}
We thus obtain the existence of a unique solution $v\in X_\tau^\sigma$ for~\eqref{Duhamel_b}, which can be represented in the form
$$
v=(\mathrm{Id}+P)^{-1}\left(S(\cdot)[v_0,v_1]+Q\eta\right).
$$
Combining this with~\eqref{1.15}, \eqref{bdt}, and~\eqref{1.22}, we obtain the required estimate~\eqref{1.16}. This completes the proof of the proposition.
\end{proof}

\begin{proof}[Proof of Lemma~\ref{l1.4}]
Let us consider the multiplication operator $f\mapsto af$. 
Using the continuity of the embedding $H^{1+d/2}\subset L^\infty$, the fact that~$H^s$ is a Banach algebra for $s>d/2$, and interpolation techniques, it is easy  to prove that 
$$
\|af\|_r\le C_1\|a\|_r\|f\|_{1+d/2}, \quad f\in H^{1+d/2}.
$$
On the other hand, it is obvious that
$$
\|af\|\le \|a\|_{L^\infty}\|f\|, \quad f\in L^2.
$$
By interpolation, the above two inequalities imply that 
$$
\|af\|_{\theta r}\le C_1^\theta \|a\|_r^\theta \|a\|_{L^\infty}^{1-\theta}\|f\|_{\theta(1+d/2)}, \quad f\in H^{\theta(1+d/2)}.
$$
Taking $\theta=s/r$, we arrive at  inequality~\eqref{1.17}. 

To prove~\eqref{1.18}, note that $p\ge1$, whence it follows that the operator of multiplication by~$a$ sends $H_0^{sp}$ to~$H_0^s$. By duality, it is also continuous from~$H^{-s}$ to~$H^{-ps}$, and inequality~\eqref{1.18} is implied by~\eqref{1.17}. 
\end{proof}

\subsection{Commutator estimates}
\label{s1.3}
Given a function $\psi\in C_0^\infty(\R)$, we define 
$$
\psi(-\Delta)f=\sum_{j=1}^\infty \psi(\lambda_j)(f,e_j)e_j, \quad f\in L^2.
$$
The aim of this subsection is to derive some estimates for the commutator of~$\psi(-\Delta)$ with the multiplication operator. In what follows, given a function~$a\in L^2(\Omega)$, we denote by the same symbol the corresponding multiplication operator sending $f$ to~$af$. We begin with the case of a smooth function. 

\begin{lemma}
\label{L:com1}
 Let $a\in C^{\infty}(\overline{\Omega})$, $\psi\in C_0^{\infty}(\R)$, and $\alpha\in [0,1/2)$. Then there is $M_3=M_3(a,\psi)$ such that, for $h\in(0,1]$, we have
\begin{align}
\label{commut1}
\left\|\left[\psi(-h^2\Delta),a\right]\right\|_{\LL(H^{\alpha+1}_0,H^{\alpha})}
&\leq M_3 h^2,\\
\label{commut2}
\left\|\left[\psi(-h^2\Delta),a\right]\right\|_{\LL(H^{-\alpha},L^2)}
&\leq M_3 h^{1-\alpha}.
\end{align} 
\end{lemma}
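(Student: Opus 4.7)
The plan is to use the Helffer--Sj\"ostrand functional calculus to reduce both bounds to resolvent estimates. I would start by fixing an almost analytic extension $\tilde\psi\in C_0^\infty(\C)$ of~$\psi$ with $|\bar\partial\tilde\psi(z)|\le C_N|\Im z|^N$ for every $N\ge0$, so that the functional calculus combined with the resolvent identity yields
$$
[\psi(-h^2\Delta),a]=-\frac{h^2}{\pi}\int_\C\bar\partial\tilde\psi(z)\,(z+h^2\Delta)^{-1}[\Delta,a](z+h^2\Delta)^{-1}\,dL(z),
$$
where $[\Delta,a]=(\Delta a)+2\nabla a\cdot\nabla$ is a first-order differential operator with smooth coefficients. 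The explicit $h^2$ prefactor is the source of the $h$-smallness.

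The main analytic input is a one-parameter family of resolvent bounds on the Dirichlet scale: for $z\in\C\setminus[0,\infty)$, $s\in\R$, and $\theta\in[0,1]$,
$$
\|(z+h^2\Delta)^{-1}\|_{\LL(H_D^s,H_D^{s+2\theta})}\le\frac{C(1+|z|)^\theta}{h^{2\theta}|\Im z|},
$$
which follows from the spectral representation in the endpoint cases $\theta=0,1$ together with interpolation.

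To prove~\eqref{commut1} I would take $\theta=0$ in both resolvent factors. Since $\alpha\in[0,\tfrac12)$, relation~\eqref{1.14} gives $H_0^{\alpha+1}=H_D^{\alpha+1}$ and $H^\alpha=H_D^\alpha$, and a direct Leibniz computation shows that $[\Delta,a]$ is bounded from $H^{\alpha+1}$ to $H^\alpha$. The integrand is then pointwise bounded by $Ch^2|\Im z|^{-2}$, and integrating against~$\bar\partial\tilde\psi$ with $N=2$ yields the $M_3h^2$ bound.

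For~\eqref{commut2} I would put the entire regularity gain on the second resolvent: $\theta=0$ in the first factor and $\theta=(1+\alpha)/2$ in the second. Starting from $f\in H^{-\alpha}=H_D^{-\alpha}$, the first resolvent preserves $H_D^{-\alpha}$, the operator $[\Delta,a]$ sends $H^{-\alpha}$ into $H^{-\alpha-1}=H_D^{-\alpha-1}$, and the second resolvent carries $H_D^{-\alpha-1}$ into $L^2$ with norm at most $C(1+|z|)^{(1+\alpha)/2}h^{-(1+\alpha)}|\Im z|^{-1}$. The $h$-budget comes out to $h^2\cdot h^{-(1+\alpha)}=h^{1-\alpha}$, and the remaining $z$-dependence is absorbed by choosing $N$ large. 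The main obstacle is precisely this routing: every intermediate Sobolev index must stay away from the half-integer thresholds $\pm\tfrac12$, where~\eqref{1.14} no longer identifies the Dirichlet and standard scales. Placing all the regularity gain on the second resolvent leaves only one critical index, $-\alpha-1\in(-\tfrac32,-1]$, which sits safely in the identification range for every $\alpha\in[0,\tfrac12)$ and lets the two pieces of $[\Delta,a]$ be controlled by standard Sobolev mapping rules.
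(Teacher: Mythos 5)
Your proof is correct, but it takes a genuinely different route from the paper's. The paper writes $\psi(-h^2\Delta)=\frac{1}{2\pi}\int_\R\hat\psi(s)\,e^{-ish^2\Delta}\,ds$ by Fourier inversion and bounds $[e^{-is\Delta},a]$ through a Duhamel/energy estimate for the Schr\"odinger group (the source term being $[a,\Delta]e^{-is\Delta}f$), which gives \eqref{commut1} directly; it then deduces \eqref{commut2} by dualising the case $\alpha=0$ of \eqref{commut1} to get an $\LL(L^2,H^{-1})$ bound of order $h^2$, inserting an auxiliary cut-off $\varphi(-h^2\Delta)$ with $\varphi=1$ on $\supp\psi$, and paying the loss through the elementary smoothing bounds $\|\varphi(-h^2\Delta)\|_{\LL(L^2,H_0^{\alpha+1})}+\|\psi(-h^2\Delta)\|_{\LL(H^{-1},H^{\alpha})}\le Ch^{-1-\alpha}$, before dualising once more. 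Your Helffer--Sj\"ostrand representation replaces both the group estimate and the cut-off trick by a single identity in which the $h$-count is read off from the fractional resolvent smoothing $\|(z+h^2\Delta)^{-1}\|_{\LL(H_D^s,H_D^{s+2\theta})}\le C(1+|z|)^{\theta}h^{-2\theta}|\Im z|^{-1}$; the budget $h^2\cdot h^{-(1+\alpha)}=h^{1-\alpha}$ is exactly the paper's $h^2\cdot h^{-1-\alpha}$, organised differently. Your concern about the half-integer thresholds in \eqref{1.14} is the right one, and the routing $H^{-\alpha}\to H^{-\alpha-1}=H_D^{-\alpha-1}\to L^2$ does avoid them; note also that $[\Delta,a]$ produces no boundary terms because multiplication by the smooth function $a$ preserves $H^2\cap H_0^1$, and that the resolvent commutator identity, proved first on $D(-\Delta)$, extends to the low-regularity data by density since both sides are continuous in the relevant norms. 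What the paper's argument buys is elementarity (no almost-analytic extensions, no interpolation of resolvent bounds); what yours buys is uniformity, since one formula delivers both estimates and adapts with no structural change to other functions of the Laplacian or to rougher coefficients $a$ once the mapping properties of $[\Delta,a]$ are known.
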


\begin{proof}
 We first prove~\eqref{commut1}. Using the Fourier inversion formula, we get
$$ 
\psi(-h^2\Delta)=\frac{1}{2\pi}\int_{\R} e^{-ish^2\Delta}\hat{\psi}(s)\,ds,
$$
where $\hat{\psi}$ is the Fourier transform of $\psi$. Thus
$$ 
\left[\psi(-h^2\Delta),a\right]f=\frac{1}{2\pi}\int_{\R} \hat{\psi}(s)v(h^2s)\,ds,
$$
where $v(s)=\left[e^{-is\Delta},a\right]f$ is the solution of the problem
\begin{equation} \label{eq_v}
-i\partial_s v+\Delta v=[a,\Delta](e^{-is\Delta}f),\quad v\bigr|_{s=0}=0.
\end{equation} 
Using the fact that $H_0^{\alpha+1}=H_D^{\alpha+1}$, we derive
\begin{align*}
 \left\|\left[a,\Delta\right]e^{-is\Delta}f\right\|_{\alpha}
 &= \left\|(\Delta \,a) e^{-is\Delta}f+2\nabla a\cdot\nabla (e^{-is\Delta}f)\right\|_{\alpha}\\
&\leq C\left\|e^{-is\Delta}f\right\|_{\alpha+1}
=C\left\|f\right\|_{\alpha+1}.
\end{align*}
Combining this with~\eqref{eq_v}, we obtain
\begin{equation*}
\|v(s)\|_{\alpha}\leq C|s|\|f\|_{\alpha+1}\quad\mbox{for all $s\geq 0$},
\end{equation*} 
whence it follows that
$$   
\left\|\left[\psi(-h^2\Delta),a\right]f\right\|_{\alpha}
\leq C\int_{\R} \|v(h^2s)\|_{\alpha}|\hat{\psi}(s)|\,ds\leq
C\,h^2\|f\|_{\alpha+1}\bigl\|s \hat{\psi}\bigr\|_{L^1}.
$$
This completes the proof of \eqref{commut1}.

\smallskip
To prove~\eqref{commut2}, we first note that inequality~\eqref{commut1} with $\alpha=0$ implies by duality that
\begin{equation}
 \label{commut1bis}
\left\|\left[a,\psi(-h^2\Delta)\right]\right\|_{\LL(L^2,H^{-1})}\leq C h^2.
\end{equation} 
Let $\varphi\in C_0^{\infty}(\R)$ be such that $\varphi=1$ on the support of $\psi$. Using the relation $\psi(-h^2\Delta)\varphi(-h^2\Delta)=\psi(-h^2\Delta)$, we get 
$$
\left[a,\psi(-h^2\Delta)\right]=\left[a,\psi(-h^2\Delta)\right]\varphi(-h^2\Delta)+\psi(-h^2\Delta)\big[a,\varphi(-h^2\Delta)\big].
$$
Combining with~\eqref{commut1}, \eqref{commut1bis}, and the inequality
$$ 
\|\varphi(-h^2\Delta)\|_{\LL(L^2,H_0^{\alpha+1})}+\|\psi(-h^2\Delta)\|_{\LL(H^{-1},H^{\alpha})}\leq Ch^{-1-\alpha},
$$
we derive
$$
\left\|\left[a,\psi(-h^2\Delta)\right]\right\|_{\LL(L^2,H^{\alpha})}
\leq C h^{1-\alpha}.
$$
By duality, we obtain~\eqref{commut2}.
\end{proof}

We now turn to the case of functions of low regularity, which will be important in the derivation of an observability inequality (see Section~\ref{s3}). 

\begin{lemma}
\label{L:com2}
For any $r>0$ there is $\sigma>0$ such that, if $\psi \in C_0^{\infty}(\R)$ and $a\in H^r(\Omega)\cap L^{\infty}(\Omega)$, then
\begin{equation} \label{1.28}
\left\|\left[\psi(-h^2\Delta),a\right]\right\|_{\LL(H^{-\sigma},H^{-1})}
\le M_4 h^{\sigma},
\end{equation}
where $M_4=M_4(\psi,\|a\|_{L^\infty\cap H^r})$. 
\end{lemma}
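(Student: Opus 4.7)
The plan is to approximate $a$ at scale $\e>0$ by a smooth function $a_\e\in C^\infty(\overline\Omega)$, split the commutator as
\[
[\psi(-h^2\Delta),a]=[\psi(-h^2\Delta),a_\e]+[\psi(-h^2\Delta),a-a_\e],
\]
and balance the two pieces by tuning $\e$ as a small positive power of~$h$. I would construct $a_\e$ via a standard extension-mollification procedure, arranging that $\|a_\e\|_{L^\infty}+\|a-a_\e\|_{L^\infty}\le C\|a\|_{L^\infty}$, that $\|a_\e\|_s\le C\e^{r-s}\|a\|_r$ for $s\ge r$, and that $\|a-a_\e\|_{r'}\le C\e^{r-r'}\|a\|_r$ for $0\le r'\le r$.

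For the smooth piece I would apply inequality~\eqref{commut2} with $\alpha=\sigma\in(0,1/2)$ and compose with the embedding $L^2\hookrightarrow H^{-1}$:
\[
\bigl\|[\psi(-h^2\Delta),a_\e]\bigr\|_{\LL(H^{-\sigma},H^{-1})}\le M_3(a_\e,\psi)\,h^{1-\sigma}.
\]
Tracking through the proof of Lemma~\ref{L:com1}, the constant $M_3(a_\e,\psi)$ is controlled by a fixed-order Sobolev norm $\|a_\e\|_{H^{N_0}}$ with $N_0=N_0(d)$; by the approximation bound above, this is in turn at most $C\e^{-\beta}\|a\|_r$ with $\beta=N_0-r$, at least when $\beta>0$ (if $r\ge N_0$ one may simply take $a_\e=a$).

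For the rough piece I would extract from the proof of Lemma~\ref{l1.4} the more precise interpolation inequality
\[
\|bf\|_{-sp'}\le C\|b\|_{r'}^{s/r'}\|b\|_{L^\infty}^{1-s/r'}\|f\|_{-s},\qquad s\in[0,r'],\ p'=\tfrac{d+2}{2r'},
\]
apply it to $b=a-a_\e$, and use that $\psi(-h^2\Delta)$ is bounded on $H^{-\tau}$ uniformly in~$h$ for $\tau\in[0,1]$ (by spectral calculus, using $H^{-\tau}=H_D^{-\tau}$ for $\tau\in[0,1/2)$ and $H^{-1}=H_D^{-1}$ by duality). Choosing $r'=(d+2)\sigma/2$, which saturates the condition $\sigma p'\le 1$ needed for the inclusion $H^{-\sigma p'}\hookrightarrow H^{-1}$, and substituting the mollification rate for $\|a-a_\e\|_{r'}$, I obtain
\[
\bigl\|[\psi(-h^2\Delta),a-a_\e]\bigr\|_{\LL(H^{-\sigma},H^{-1})}\le C\,\e^{2r/(d+2)-\sigma}\|a\|_{L^\infty\cap H^r}.
\]

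Setting $\e=h^\kappa$, the two contributions become $h^{1-\sigma-\kappa\beta}$ and $h^{\kappa(2r/(d+2)-\sigma)}$; requiring both to be $\le h^\sigma$ reduces to a pair of linear inequalities in $\kappa$ and~$\sigma$, jointly solvable once $\sigma$ is small enough in terms of $r$ and~$d$, and this delivers~\eqref{1.28}. The main obstacle is the bookkeeping---checking that the windows in $\sigma$, $r'$, and $\kappa$ are simultaneously open and produce a positive final exponent---together with some care about the extension operator when $r\ge 1/2$, so that the above mollification rates genuinely hold uniformly in~$\e$ on the bounded domain~$\Omega$.
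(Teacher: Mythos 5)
Your proposal is correct, but it takes a genuinely different route from the paper. The paper keeps the rough coefficient $a$ throughout and interpolates three operator bounds for $A_h=[a,\psi(-h^2\Delta)]$: a trivial uniform $L^2\to L^2$ bound, the $\LL(H_D^{ps},H_D^{s})$ bound coming from Lemma~\ref{l1.4}, and an $H_D^2\to H_D^{-2}$ bound of size $h^2$ obtained from the Fourier-inversion/Schr\"odinger-group computation; two interpolations and a duality then give \eqref{1.28} with the explicit value $\sigma=1/(2p+1)$, $p=(d+2)/(2r)$. You instead regularise the coefficient, apply Lemma~\ref{L:com1} to the smooth part (note that you must re-derive its constant quantitatively, as $M_3(a_\e,\psi)\lesssim \|a_\e\|_{H^{N_0}}$ --- the statement of that lemma does not record this dependence, though its proof yields it), and handle the remainder $a-a_\e$ by the refined form of Lemma~\ref{l1.4} in which the interpolation between $\|b\|_{L^\infty}$ and $\|b\|_{r'}$ is kept explicit, so that all the smallness comes from $\|a-a_\e\|_{r'}\lesssim \e^{\,r-r'}$ while the $L^\infty$ factor stays merely bounded. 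The bookkeeping you flag does close: with $\e=h^{\kappa}$ the constraints are $\kappa\beta\le 1-2\sigma$ and $\sigma(1+\kappa)\le 2\kappa r/(d+2)$, which are simultaneously solvable by first fixing $\kappa<1/\beta$ and then shrinking $\sigma$. The paper's route buys a larger, explicit $\sigma$ and avoids the $L^\infty$-preserving extension/mollification on $\Omega$; your route uses the smooth-coefficient commutator lemma as a quantified black box and treats the rough remainder by pure multiplication estimates with no commutator gain. Since both arguments ultimately extract the power of $h$ from the same Duhamel computation for $[e^{-it\Delta},a]$, neither is strictly more elementary, but yours is a perfectly viable alternative.
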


\begin{proof}
There is no loss of generality in assuming that $0<r\le1$. 
Let us set $A_h=\left[a,\psi(-h^2\Delta)\right]$. Since $\psi(-h^2\Delta):L^2\to L^2$ is bounded uniformly in $h\in(0,1]$, we have 
\begin{equation}
\label{estimA1}
\sup_{h\in (0,1]} \|A_h\|_{\LL(L^2,L^2)}<\infty.
\end{equation} 
Furthermore, since $\psi(-h^2\Delta):H_D^s\to H_D^s$ is also bounded uniformly in $h\in(0,1]$ for any $s\in\R$, it follows from~\eqref{1.17} that
\begin{equation}
 \label{estimA2}
\sup_{h\in (0,1]} \|A_h\|_{\LL(H^{ps}_D,H^{s}_D)}<\infty
\quad\mbox{for $s\in[0,r]$},
\end{equation} 
where $p=\frac{d+2}{2r}$. We next show
\begin{equation}
\label{estimA3}
\sup_{h\in(0,1]} h^{-2}\|A_h\|_{H_D^2\to H_D^{-2}}<\infty.
\end{equation} 
To this end, we write (cf.\ proof of Lemma~\ref{L:com1})
\begin{equation}
\label{A_h}
A_h f=\frac{1}{2\pi}\int_\R \hat{\psi}(\tau)v(h^2\tau)\,d\tau,\quad 
v(t)=ae^{-it\Delta}f-e^{-it\Delta}(af).
\end{equation} 
Now note that
$$ 
\partial_t v=i\Delta\left(e^{it\Delta}(af)\right)-ia\Delta\left(e^{it\Delta}f\right),
\quad v(0)=0.
$$
It follows that
\begin{equation*}
 \|\partial_t v\|_{H_D^{-2}}\leq \left\|e^{it\Delta}(af)\right\|_{L^2}+C\left\|\Delta e^{it\Delta}f\right\|_{L^2}\leq C\|f\|_{H_D^{2}}^2,
\end{equation*}
whence we conclude that
$$ 
\|v\|_{H_D^{-2}}\leq C|t|\|f\|_{H_D^{2}}. 
$$
Recalling~\eqref{A_h}, we get the inequality
$$ 
\|A_hf\|_{H_D^{-2}}\leq Ch^2\bigl\|\tau\hat{\psi}\bigr\|_{L^1}\|f\|_{H_D^2},
$$
which implies~\eqref{estimA3}.

\smallskip
Interpolating \eqref{estimA1} and \eqref{estimA3}, we derive
\begin{equation*}
\sup_{h\in (0,1]}
 h^{-s}\left\|A_h\right\|_{\LL(H_D^{s},H_D^{-s})}<\infty.
\end{equation*} 
Interpolating with \eqref{estimA2}, we deduce
\begin{equation*}
\sup_{h\in (0,1]}
 h^{-s/3}\left\|A_h\right\|_{H_D^{(2p+1)s/3}\to H_D^{s/3}}<\infty.
 \end{equation*} 
Taking $s=3/(2p+1)$, by duality we obtain inequality~\eqref{1.28} with $\sigma=s/3$.
\end{proof}

\section{Stabilisation of the linearised equation}
\label{s2}

This section is devoted to the stabilisation of the linearised problem~\eqref{2.1}, \eqref{2.2}, in which $\gamma>0$, $b$ is a given function, and~$\eta$ is a finite-dimensional control supported by a given subdomain of~$\Omega$. The main result of this section is the existence of a feedback control exponentially stabilising problem~\eqref{2.1}, \eqref{2.2}. To this end, we first construct a finite-dimensional stabilising control and then use a standard technique to get a feedback law.

\subsection{Main result and scheme of its proof}
As before, we denote by~$\Omega\subset\R^3$ a bounded domain with $C^\infty$ boundary~$\Gamma$. We shall always assume that the following two conditions are satisfied. 

\begin{condition} \label{c2.1}
The smooth surface~$\Gamma$ has only finite-order contacts with its tangent straight lines. 
\end{condition}
In other words, let $y\in\Gamma$ and $\ttau_y\subset\R^3$ be the tangent plane to~$\Gamma$ at the point~$y$. In a small neighbourhood of~$y$, the surface~$\Gamma$ can be represented as the graph of a smooth function~$\varphi_y:\ttau_y\to\R$ vanishing at~$y$ together with its first-order derivatives. Condition~\ref{c2.1} requires that the restriction of~$\varphi_y$ to the straight lines passing through~$y$ has no zero of infinite order at the point~$y$. 

To formulate the second condition, we first introduce some notation. Given $x_0\in\R^3$, define $\Gamma(x_0)$ as the set of points $y\in\Gamma$ such that $\langle y-x_0,\nnn_y\rangle>0$, where~$\nnn_y$ stands for the outward unit  normal to~$\Gamma$ at the point~$y$. Let~$\omega$ be the support of the control function~$\eta$ entering~\eqref{2.1}.

\begin{condition} \label{c2.2}
There is $x_0\in \R^3\setminus\overline\Omega$ and $\delta>0$ such that
\begin{equation*} \label{2.4}
\Omega_\delta(x_0):=\{x\in\Omega:\mbox{there is $y\in\Gamma(x_0)$ such that $|x-y|<\delta$}\}\subset\omega.
\end{equation*}
\end{condition}

Before formulating the main result of this section, let us make some comments about the above hypotheses. Condition~\ref{c2.2} naturally arising in the context of the multiplier method (see~\cite{lions1988}) ensures that the observability inequality holds for~\eqref{2.1} in the energy norm. On the other hand, Condition~\ref{c2.1} enables one to define a generalised bicharacteristic flow on~$\Omega$ (see Section~24.3 in~\cite{hormander1994}). Together with Condition~\ref{c2.2}, this implies that if~$T$ is sufficiently large, then for any $\delta'\in(0,\delta)$ the pair $(\Omega_{\delta'}(x_0),T)$ geometrically controls~$\Omega$ in the sense that every generalised bicharacteristic ray of length~$T$ meets the set~$\Omega_{\delta'}(x_0)$. In view of~\cite{BLR-1992}, it follows that the observability inequality holds for Eq.~\eqref{2.1} with $b\equiv0$ in spaces of negative regularity. We shall combine these two results with some commutators estimates and a compactness argument to establish a truncated observability inequality for~\eqref{2.1} (see Section~\ref{s3.2}), which is a key point of the proof of the theorem below.  

\smallskip
Let us fix a function $\chi\in C_0^\infty(\R^3)$ such that $\supp\chi\cap\Omega\subset\omega$ and $\chi(x)=1$ for $x\in\Omega_{\delta/2}(x_0)$. We denote by~$\FF_m$ the vector span of the functions  $\{\chi e_1,\dots,\chi e_m\}$, where $\{e_j\}$ is a complete set of $L^2$ normalised eigenfunctions for the Dirichlet Laplacian. The following theorem is the main result of this section.

\begin{theorem} \label{t2.1}
Let Condition~\ref{c2.1} and~\ref{c2.2} be satisfied, let~$R$ and~$r$ be positive numbers, and let $b(t,x)$ be a function such that
\begin{equation} \label{2.5}
\barr b\barr:= \esssup_{t\ge 0}\|b(t,\cdot)\|_{L^\infty\cap H^r}\le R.
\end{equation}
Then there is an integer $m\ge1$, positive numbers~$C$ and~$\beta$, and a family of continuous linear operators
$$
K_b(t):H_0^1\times L^2\to\FF_m, \quad t\ge0,
$$
such that the following properties hold. 
\begin{description}
\item[\bf Time continuity and boundedness.]
The function $t\mapsto K_b(t)$ is continuous from~$\R_+$ to the space $\LL(H_0^1\times L^2,\FF_m)$ endowed with the weak operator topology, and its norm is bounded by~$C$.

\item[Exponential decay.]
For any $s\ge0$ and $[v_0,v_1]\in H_0^1\times L^2$, problem~\eqref{2.1}, \eqref{2.2} with the right-hand side $\eta(t)=K_b(t)[v(t),\dot v(t)]$ has a unique solution $v\in C(\R_+,H_0^1)\cap C^1(\R_+,L^2)$ satisfying the  initial conditions
\begin{equation} \label{2.6}
v(s,x)=v_0(x), \quad \p_tv(s,x)=v_1(x). 
\end{equation}
Moreover, we have the inequality
\begin{equation} \label{2.7}
E_v(t)\le C\,e^{-\beta(t-s)} E_v(s), \quad t\ge s. 
\end{equation}
\end{description}
\end{theorem}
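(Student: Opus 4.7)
The plan is to combine three ingredients: (i) the exponential decay of the \emph{free} damped wave equation without potential, (ii) a splitting of~$v$ into a rough free-evolution part plus a smoother inhomogeneous part, and (iii) a truncated observability inequality for~\eqref{2.1} with finite-dimensional observations, postponed to Section~\ref{s3}. First I would write $v=v^\sharp+w$ on $\R_+$, where $v^\sharp$ solves $\ddot v^\sharp+\gamma\dot v^\sharp-\Delta v^\sharp=0$ with initial data $[v_0,v_1]$, so that $w$ has vanishing initial data and satisfies~\eqref{2.1} with right-hand side $\eta-b v^\sharp$. Since $\gamma>0$ and $\Omega$ is a bounded domain with Dirichlet condition, $E_{v^\sharp}(t)\le C_0 e^{-2\mu_0 t} E_{v^\sharp}(0)$ for some $\mu_0>0$. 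By Lemma~\ref{l1.4}, the product $b v^\sharp$ lies in $L^\infty_{\mathrm{loc}}(\R_+,H^s)$ for some $s=s(r)>0$, so an energy estimate at the $H^{1+s}$ level places $w$ in $C(\R_+,H^{1+s})\cap C^1(\R_+,H^s)$, with its higher-Sobolev norm controlled by $E_v(0)$ and by $\|\eta\|$.

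\medskip

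Next I would use this extra regularity to decouple high and low frequencies. Let $P_m$ denote the $L^2$-projector onto $\mathrm{span}(e_1,\dots,e_m)$; then
$$
\|(I-P_m)w(t)\|_1+\|(I-P_m)\dot w(t)\|\lesssim \lambda_m^{-s/2}\bigl(\|w(t)\|_{1+s}+\|\dot w(t)\|_s\bigr),
$$
so the high-mode tail of~$w$ contributes an arbitrarily small fraction of the energy once $m$ is large. It remains to drive the low-frequency part $P_m w$ small at a fixed final time~$T$. Here the truncated observability inequality of Section~\ref{s3} enters via HUM duality: it produces a bounded linear map from initial data $[v_0,v_1]\in H_0^1\times L^2$ to a control $\eta\in L^2(J_T,\FF_m)$ under which the corresponding solution of~\eqref{2.1} satisfies $E_v(T)\le\kappa E_v(0)$ with some $\kappa\in(0,1)$. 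Quantitatively, one first fixes $m$ so large that the high-mode tail of $w$ is bounded by $\tfrac14 E_v(0)$, then $T$ so large that the $v^\sharp$ contribution has decayed past $\tfrac14 E_v(0)$, and this yields the contraction uniformly in $[v_0,v_1]$ and in~$b$ subject to~\eqref{2.5}.

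\medskip

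Iterating this open-loop construction on consecutive intervals $[kT,(k+1)T]$ furnishes~\eqref{2.7} on all of $\R_+$ with $\beta$ of the order $\log(1/\kappa)/T$. To convert the open-loop control into the feedback law $K_b(t)$ demanded by the statement I would follow the standard device used in~\cite{BRS-2011}: on each slot $[s,s+T]$ the open-loop operator is bounded and linear in the current state $\varPhi_v(s)$, which defines $K_b(t)$ at time $t\in[s,s+T]$ as a bounded operator from $H_0^1\times L^2$ to $\FF_m$. A naive piecewise definition is only weakly continuous at the endpoints $t=kT$; to obtain the weak continuity in~$t$ required by the statement I would smooth the construction either by averaging the open-loop law over a short sliding window in~$s$ or by gluing overlapping windows with a continuous partition of unity, at a negligible loss in the constants. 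Well-posedness of the resulting closed-loop problem follows from Proposition~\ref{p1.3} applied with $\sigma=0$ and a standard fixed-point argument on short intervals.

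\medskip

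The hardest step is the truncated observability inequality of Section~\ref{s3}: the observation must be through the finite-dimensional space $\FF_m$ rather than through all of $L^2(\omega)$, and it must be \emph{robust} against the low-regularity potential~$b$ satisfying only~\eqref{2.5}. Its proof will be built on the classical Bardos--Lebeau--Rauch observability for the free wave equation, available under Conditions~\ref{c2.1}--\ref{c2.2}, combined with a compactness--uniqueness argument to absorb the perturbation $bv$; the commutator estimates of Lemmas~\ref{L:com1}--\ref{L:com2} are designed precisely to bound this perturbation in negative-regularity spaces. A minor but real secondary difficulty is ensuring weak continuity in~$t$ of the resulting feedback operator, which requires the smoothing procedure mentioned above rather than a straight piecewise construction.
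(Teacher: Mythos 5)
Your open-loop construction coincides in all essentials with the paper's Proposition~\ref{p2.4} and Corollary~\ref{c2.5}: the same splitting $v=z+w$ with $z$ the free damped evolution, the same use of the extra regularity of $w$ to make the tail $(I-{\mathsf P}_N)\varPhi_w(T)$ an arbitrarily small fraction of the energy, the same appeal to the truncated observability inequality of Section~\ref{s3} (through a penalised HUM/linear--quadratic duality) to control the low modes by a control with values in $\FF_m$, and the same iteration over windows of length $T$ to get \eqref{2.8} and then \eqref{2.7} with $s=0$. Up to that point the proposal is sound.

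The gap is in the passage from the open-loop control to the feedback law $\eta(t)=K_b(t)\varPhi_v(t)$, and it is not the cosmetic continuity issue you flag. First, on a window $[kT,(k+1)T]$ your control at time $t$ is a linear function of $\varPhi_v(kT)$, the state at the \emph{beginning} of the window, not of the current state $\varPhi_v(t)$; rewriting it in the form demanded by the theorem requires composing with the inverse of the closed-loop flow from $kT$ to $t$, which you do not address. Second, and more seriously, the control produced by duality from the observability inequality is a priori only an element of $L^2(J_T,\FF_m)$ in the time variable, so ``the value of the open-loop law at time $t$'' is not defined for each fixed $t$, and $K_b(t)$ cannot be read off pointwise as a bounded operator; moreover your proposed remedies (averaging over a sliding window in $s$, or gluing overlapping windows by a partition of unity) modify the control, and you give no argument that the squeezing estimate survives the modification --- this cannot be dismissed as a negligible loss in the constants. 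The paper resolves both difficulties by a genuinely different device, the dynamic programming principle: it sets up the infinite-horizon linear--quadratic Problem~\ref{p2.6}, whose optimal cost defines a Riccati-type operator $Q_s$ that is weakly continuous in $s$ and satisfies \eqref{2.27}, and it reads the feedback off the optimality system as in \eqref{2.31}, $K_b(t)V=-e^{-\beta t}\chi\,{\mathsf P}_m\bigl(\chi Q_t^1V\bigr)$; there the adjoint state is continuous in time with values in $L^2$, which simultaneously yields the pointwise definition, the uniform bound, and the weak continuity of $t\mapsto K_b(t)$. To complete your route you would have to either establish time-regularity of the open-loop control and invert the controlled flow with uniform bounds, or switch to the dynamic-programming argument.
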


Let us sketch the proof of this result. We first prove that, for any $\beta>0$ and sufficiently large~$T>0$, there is a linear operator $\Theta_s:\HH\to L^2(J_T(s),\FF_m)$, where $J_T(s)=[s,s+T]$ and $\HH=H_0^1\times L^2$, such that the norm of~$\Theta_s$ is bounded uniformly in~$s\ge0$, and for any $[v_0,v_1]\in H_0^1\times L^2$ the solution of problem~\eqref{2.1}, \eqref{2.2}, \eqref{2.6} with $\eta=\Theta_s[v_0,v_1]$ satisfies the inequality
\begin{equation} \label{2.8}
E_v(s+T)\le e^{-\beta T} E_v(s). 
\end{equation}
For given initial data  $[v_0,v_1]\in H_0^1\times L^2$, an exponentially stabilising control~$\eta$ can be constructed by the rule
\begin{equation} \label{2.9}
\eta\bigr|_{J_T(0)}=\Theta_0[v_0,v_1], \qquad \eta\bigr|_{J_T(kT)}=\Theta_{kT}\varPhi_v(kT), \quad k\ge1.
\end{equation}
Inequality~\eqref{2.8} and the uniform boundedness of~$\Theta_s$ imply that~\eqref{2.7} holds with $s=0$. Once the existence of at least one exponentially stabilising control is proved, one can use a standard technique based on the dynamical programming principle to construct a feedback law possessing the required properties. The uniqueness of a solution is proved by a standard argument based on the Gronwall inequality. 

\smallskip
The rest of this section is organised as follows. In Subsection~\ref{s2.2}, we prove the existence of an operator~$\Theta_s$ with the above-mentioned properties. A key point of the proof is the truncated observability inequality established in Section~\ref{s3}. Subsection~\ref{s2.3}  deals with the construction of an exponentially stabilising feedback law. Its properties mentioned in the theorem are established in Subsection~\ref{s2.4}. 
In what follows, the domain~$\Omega$ and its  closed subset~$\omega$ are assumed to be fixed, and we do not follow the dependence of other quantities on them.

\subsection{Construction of a stabilising control}
\label{s2.2}
\begin{proposition} \label{p2.4}
Let the hypotheses of Theorem~\ref{t2.1} hold and let $T>0$ be sufficiently large. Then, for a sufficiently small $\sigma>0$, there is a constant~$C$ and an integer~$m\ge1$, depending only~$R$ and $r$, such that, for any $s\ge0$, one can construct a continuous linear operator $\Theta_s: \HH\to L^2(J_T(s),H^\sigma)$ satisfying the following properties.
\begin{description}
\item[\bf Boundedness.] The norm of~$\Theta_s$ is bounded by~$C$ for any~$s\ge0$, and its image is contained in $L^2(J_T(s),\FF_m)$.
\item[\bf Squeezing.]
Let $[v_0,v_1]\in \HH$ and $\eta=\Theta_s[v_0,v_1]$. Then the solution of~\eqref{2.1}, \eqref{2.2}, \eqref{2.6} satisfies inequality~\eqref{2.8}. 
\end{description}
\end{proposition}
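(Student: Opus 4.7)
The plan, outlined already in Section~\ref{s0}, is to split the solution into a free damped part, which decays automatically, and a forced part, which is stabilised by a finite-dimensional HUM-type control built on the truncated observability inequality of Section~\ref{s3}.

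First I would write $v=\tilde v+w$, where $\tilde v$ solves the free damped problem $\ddot{\tilde v}+\gamma\dot{\tilde v}-\Delta\tilde v=0$ with Dirichlet boundary condition and initial data $[v_0,v_1]$ at time $s$, while $w$ solves
$$
\ddot w+\gamma\dot w-\Delta w+bw=\eta-b\tilde v,\qquad \varPhi_w(s)=0.
$$
A standard Lyapunov-functional argument for the free damped wave equation with Dirichlet boundary condition (using that $\frac{d}{dt}E_{\tilde v}=-\gamma\|\dot{\tilde v}\|^2$ together with the Poincar\'e inequality via a perturbed energy $E_{\tilde v}+\e(\tilde v,\dot{\tilde v})$) yields the exponential decay $E_{\tilde v}(s+t)\le C_0 e^{-\mu t}E_v(s)$ for some $C_0,\mu>0$ depending only on $\gamma$ and $\Omega$. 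Consequently, if one manages to design $\eta$ so that $E_w(s+T)\le\delta E_v(s)$, then from $E_v\le 2E_{\tilde v}+2E_w$ one gets $E_v(s+T)\le 2(C_0 e^{-\mu T}+\delta)E_v(s)$, and the squeezing inequality \eqref{2.8} follows by first picking $T$ so large that $2C_0 e^{-\mu T}\le\frac12 e^{-\beta T}$, and then choosing $\delta\le\frac14 e^{-\beta T}$.

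To build $\eta$ I would use the HUM duality suitably adapted to the finite-dimensional control space $\FF_m$. Namely, consider the backward adjoint problem $\ddot z-\gamma\dot z-\Delta z+bz=0$ on $J_T(s)$ with Dirichlet boundary condition and terminal data at $t=s+T$, and look for controls of the form $\eta(t)=P_m(\chi z(t))$, where $P_m$ is the $L^2$-orthogonal projection onto $\FF_m=\mathrm{span}(\chi e_1,\dots,\chi e_m)$. The associated HUM quadratic form, evaluated at the terminal data of $z$, is $\int_s^{s+T}\|P_m(\chi z(t))\|^2\,dt$, and its coercivity on the relevant energy-type space is exactly the truncated observability inequality proved in Section~\ref{s3}; by taking $m$ large, the lower-order remainder in that inequality can be made arbitrarily small. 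A Lax--Milgram (or minimisation) argument, together with the well-posedness estimate~\eqref{1.16} applied to $w$ with source $\eta-b\tilde v$, then produces a control $\eta\in L^2(J_T(s),\FF_m)$ that depends linearly on $[v_0,v_1]$ and drives $w(s+T)$ to within $\delta E_v(s)$ of zero in the energy norm. Since all quantitative constants (HUM bound, the commutator estimates of Lemmas~\ref{L:com1}--\ref{L:com2}, Proposition~\ref{p1.3}) depend on $b$ only through the uniform bound $\barr b\barr\le R$, the construction is uniform in $s\ge 0$ and yields a linear map $\Theta_s:\HH\to L^2(J_T(s),\FF_m)$ with the claimed norm bound; because $\FF_m\subset C_0^\infty(\Omega)$, any $\sigma\in(0,\sigma_0(r)]$ is admissible and the inclusion $\eta\in L^2(J_T(s),H^\sigma)$ is automatic.

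The main obstacle is not really in the present proposition but in the truncated observability inequality deferred to Section~\ref{s3}. Its proof has to handle the low-regularity potential $b\in L^\infty\cap H^r$: one combines the geometric-control observability of~\cite{BLR-1992} for the $b\equiv 0$ equation with a frequency decomposition $\psi(-h^2\Delta)$, a compactness/uniqueness argument to absorb the lower-order terms, and the commutator estimates of Lemmas~\ref{L:com1} and~\ref{L:com2} to move the frequency cut-off past multiplication by $b$ with a quantifiable error. Once that inequality is in hand, the present proposition reduces to the essentially standard HUM construction sketched above.
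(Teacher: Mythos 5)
Your first half is on track: the decomposition $v=z+w$ with $z$ the free damped wave (which decays exponentially, so that for large $T$ one only needs $\|\varPhi_w(s+T)\|_{\HH}\le\frac14\|[v_0,v_1]\|_{\HH}$) is exactly the paper's starting point, and a duality/minimisation construction of $\eta$ is also the right general idea. The gap is in how the truncated observability inequality is used. Theorem~\ref{t3.2} is \emph{not} a coercivity statement for the HUM form on the whole energy space: inequality~\eqref{3.10} holds only for adjoint solutions whose terminal data lie in the finite-dimensional space $H_N\times H_N$, and it controls the weak norm $\HH^{-\sigma}$, not the energy norm. There is no version in which ``the lower-order remainder can be made arbitrarily small by taking $m$ large'' for arbitrary adjoint data; indeed a control with values in the fixed finite-dimensional space $\FF_m$ cannot, by duality alone, drive the full energy norm of $w(s+T)$ below $\delta\|[v_0,v_1]\|_{\HH}$ uniformly in the data, since that would require an observability inequality with ${\mathsf P}_m(\chi\,\cdot)$ in place of $\chi\,\cdot$ for \emph{all} adjoint states, which is false. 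So the step ``produces a control that drives $w(s+T)$ to within $\delta E_v(s)$ of zero in the energy norm'' would fail if implemented as written.

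The paper closes this gap with two ingredients that are absent from your sketch. First, the cost functional (Problem~\ref{p2.5}) penalises only the low-frequency part ${\mathsf P}_N\varPhi_w(T)$; the optimality conditions~\eqref{2.24} then force the adjoint state $q$ to have terminal data in $H_N\times H_N$, which is precisely the hypothesis under which Theorem~\ref{t3.2} applies with $m=m(N)$, yielding $\|{\mathsf P}_N\varPhi_w(T)\|\le\e\|[v_0,v_1]\|_{\HH}$. Second, the high-frequency part is handled not by the control but by a regularity gain: because the source $\eta-bz-bw$ of the $w$-equation lies in $L^1(J_T,H^\sigma)$ (this is why the control is penalised in $H^\sigma$ and why Lemma~\ref{l1.4} is needed), $\varPhi_w(T)$ is bounded in a space strictly smoother than $\HH$, so $\|(I-{\mathsf P}_N)\varPhi_w(T)\|_{\HH}\le\delta_N(\sigma)\|\varPhi_w(T)\|_{\HH^\sigma}$ with $\delta_N(\sigma)\to0$ as $N\to\infty$. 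Only the combination of these two estimates gives~\eqref{2.20} and hence~\eqref{2.8}. You should restructure the second half of your argument around this low-frequency/high-frequency splitting of the target.
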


An immediate consequence of this proposition is the following result on the existence of a stabilising control. For $\beta>0$ and a Banach space~$X$, we denote by $L_\beta^2(\R_+,X)$ the space of locally square-integrable functions $f:\R_+\to X$ such that
$$
\|f\|_{L_\beta^2}:=\sup_{t\ge0}\int_t^{t+1}e^{\beta s}\|f(s)\|_X^2ds<\infty. 
$$

\begin{corollary} \label{c2.5}
Under the hypotheses of Theorem~\ref{t2.1}, there is~$\beta>0$ and a continuous linear operator $\Theta: \HH\to L_\beta^2(\R_+,\FF_m)$, where~$\FF_m$ is endowed with the norm of~$H^\sigma$ with some $\sigma>0$,  such that the solution of problem~\eqref{2.1}, \eqref{2.2}, \eqref{2.6} with $\eta=\Theta[v_0,v_1]$ and $s=0$ satisfies inequality~\eqref{2.7} with $s=0$. 
\end{corollary}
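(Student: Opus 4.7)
The plan is to iterate Proposition~\ref{p2.4} on consecutive windows $J_T(kT)$, $k=0,1,2,\dots$, and glue the resulting pieces into a single operator on~$\R_+$. Fix the constants $T$, $\sigma$, and~$m$ supplied by that proposition and write $\HH=H_0^1\times L^2$. Given $[v_0,v_1]\in\HH$, I would define~$\eta$ and the corresponding solution~$v$ by recurrence, following~\eqref{2.9}: on $J_T(0)$ set $\eta=\Theta_0[v_0,v_1]$ and let~$v$ be the solution of~\eqref{2.1}, \eqref{2.2}, \eqref{2.6} issued from $[v_0,v_1]$ at $s=0$ (which exists by the classical energy theory since $b\in L^\infty$); then, once~$v$ has been defined on $[0,kT]$, set $\eta\bigl|_{J_T(kT)}=\Theta_{kT}\varPhi_v(kT)$ and continue~$v$ on the next window. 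The squeezing property of Proposition~\ref{p2.4} yields $E_v((k+1)T)\le e^{-\beta T}E_v(kT)$, whence by induction
\begin{equation*}
E_v(kT)\le e^{-\beta kT}E_v(0), \qquad k\ge0.
\end{equation*}

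To pass from this discrete geometric decay to the continuous-time bound~\eqref{2.7}, I would apply a standard energy estimate on each window: the Gronwall lemma produces $M_0=M_0(T,R)$ with
\begin{equation*}
\sup_{t\in J_T(kT)}E_v(t)\le M_0\bigl(E_v(kT)+\|\eta\|_{L^2(J_T(kT),L^2)}^2\bigr).
\end{equation*}
The boundedness clause of Proposition~\ref{p2.4} bounds $\|\eta\|_{L^2(J_T(kT),H^\sigma)}$ by $C\|\varPhi_v(kT)\|_{\HH}=C\,E_v(kT)^{1/2}$, and since $H^\sigma\hookrightarrow L^2$, the right-hand side is controlled by $C'E_v(kT)\le C'e^{-\beta kT}E_v(0)$. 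Choosing any $\beta'\in(0,\beta)$, this interpolates to $E_v(t)\le C_1 e^{-\beta' t}E_v(0)$ for all $t\ge0$, which is~\eqref{2.7} at $s=0$.

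To estimate the weighted norm of~$\eta$, observe that the boundedness of $\Theta_{kT}$ yields
\begin{equation*}
\int_{kT}^{(k+1)T}\|\eta(s)\|_{H^\sigma}^2\,ds\le C^2 E_v(kT)\le C^2 e^{-\beta kT}E_v(0).
\end{equation*}
For any $t\ge0$, the integral $\int_t^{t+1}e^{\beta' s}\|\eta(s)\|_{H^\sigma}^2\,ds$ decomposes into at most $\lceil 1/T\rceil+1$ contributions of the above type, each bounded by $C^2 e^{\beta'(k+1)T-\beta kT}E_v(0)$. With $\beta'<\beta$ this expression is uniformly bounded in~$k$, and hence in~$t$, by a multiple of~$E_v(0)$; therefore $\|\eta\|_{L_{\beta'}^2(\R_+,\FF_m)}\le C'\|[v_0,v_1]\|_{\HH}$. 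Linearity of $\Theta:[v_0,v_1]\mapsto\eta$ follows from the linearity of~\eqref{2.1}, \eqref{2.2} and of each $\Theta_{kT}$, so this norm bound gives continuity. The construction presents no real obstacle beyond Proposition~\ref{p2.4}; the one delicate point is the selection of $\beta'<\beta$, required to absorb both the per-window constant $M_0$ and the factor $e^{\beta'T}$ appearing in the weighted estimate.
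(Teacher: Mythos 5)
Your proposal is correct and follows essentially the same route as the paper: define $\eta$ by the recurrence \eqref{2.9}, obtain geometric decay of $E_v(kT)$ from the squeezing property, control $\eta$ on each window by the uniform bound on $\Theta_{kT}$, and fill in between the sample times $kT$ with a standard energy estimate. The only cosmetic difference is that you shrink the exponent to some $\beta'<\beta$ to absorb the per-window constants, whereas the paper keeps the same $\beta$ and lets the multiplicative constant $C$ in \eqref{2.7} absorb the factor $e^{\beta T}$; both are fine.
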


\begin{proof}
Let us define a control $\eta:\R_+\to\FF_m$ by relations~\eqref{2.9}. It follows from~\eqref{2.8} that
\begin{equation} \label{2.11}
E_v(kT)\le e^{-\beta T k} E_v(0), \quad k\ge0. 
\end{equation}
Since the norms of~$\Theta_s$ are bounded uniformly in~$s\ge0$, it follows from~\eqref{2.9} and~\eqref{2.11} that
\begin{equation} \label{2.12}
\bigl\|\eta|_{J_T(kt)}\bigr\|_{L^2(J_T(kT),\FF_m)}
\le C\,\|\varPhi_v(kT)\|_\HH
\le Ce^{-\beta T k/2}\bigl\|[v_0,v_1]\bigr\|_\HH. 
\end{equation}
This inequality shows that $\eta\in L_\beta^2(\R_+,\FF_m)$ and the operator $[v_0,v_1]\mapsto \eta$ is continuous from~$\HH$ to $L_\beta^2(\R_+,\FF_m)$. 
Furthermore, the continuity of the resolving operator for problem~\eqref{2.1}, \eqref{2.2} and inequalities~\eqref{2.11} and~\eqref{2.12} imply that
\begin{align*}
\sup_{t\in J_T(kT)}\|\varPhi_v(t)\|_\HH
&\le C\Bigl(\bigl\|\varPhi_v(kT)\bigr\|_\HH
+\bigl\|\eta|_{J_T(kt)}\bigr\|_{L^2(J_T(kT),\FF_m)}\Bigr)\\
&\le Ce^{-\beta T k/2}\bigl\|[v_0,v_1]\bigr\|_\HH.
\end{align*}
This immediately implies the required estimate~\eqref{2.7} with $s=0$. 
\end{proof}

\begin{proof}[Proof of Proposition~\ref{p2.4}]
We first describe the scheme of the proof. 
Define an energy-type functional for a trajectory $v(t,x)$ by the relation
$$
\EE_v(t)=\int_\Omega\bigl(|\dot v|^2+|\nabla v|^2+\alpha v\dot v\bigr)\,dx. 
$$
For small $\alpha>0$, this quantity is equivalent to~$E_v(t)$:
\begin{equation} \label{2.13}
C^{-1}E_v(t)\le \EE_v(t)\le C E_v(t), \quad t\in\R_+.
\end{equation}
Let $z$ be the solution of problem~\eqref{2.1}, \eqref{2.2}, \eqref{2.6} with $b\equiv\eta\equiv0$. Taking the scalar product in~$L^2$ of the equation for~$z$ with $2\dot z+\alpha z$, we can find $\delta>0$ such that
\begin{equation} \label{2.14}
E_z(t)\le C\EE_z(t)\le Ce^{-\delta (t-s)}\EE_z(s)\le C^2e^{-\delta (t-s)}E_z(s). 
\end{equation}
In particular, if $T>0$ is sufficiently large, then
\begin{equation} \label{2.15}
\|\varPhi_z(s+T)\|_\HH\le \frac14 \bigl\|[v_0,v_1]\bigr\|_\HH. 
\end{equation}
We seek a solution in the form $v=z+w$. Then $w$ must be a solution of the control problem
\begin{align} 
\ddot w+\gamma\dot w-\Delta w+b(t,x)w&=\eta(t,x)-b(t,x)z,\quad x\in \Omega,\label{2.16}\\
w\bigr|_{\p \Omega}&=0,\label{2.17}\\
w(s,x)=0, \quad \dot w(s,x)&=0. 
\label{2.18}
\end{align}
Given an integer $N\ge1$ and a constant~$\e>0$, we shall construct a control~$\eta$ such that the corresponding solution~$w$ satisfies the inequalities
\begin{equation} \label{2.19}
\|\varPhi_w(s+T)\|_{\HH^\sigma}\le M_5\bigl\|[v_0,v_1]\bigr\|_\HH\,, \quad 
\|{\mathsf P}_N\varPhi_w(s+T)\|\le \e\,\bigl\|[v_0,v_1]\bigr\|_\HH\,,
\end{equation}
where ${\mathsf P}_N$ stands for the orthogonal projection in~$L^2(\Omega)$ to the vector span of the first~$N$ eigenfunctions of the Dirichlet Laplacian, and~$\sigma>0$ and~$M_5$ are constants not depending on~$N$ and~$\e$. For an appropriate choice of~$N$ and~$\e$, these two inequalities imply that 
\begin{equation} \label{2.20}
\|\varPhi_w(s+T)\|_\HH\le\frac14\bigl\|[v_0,v_1]\bigr\|_\HH.
\end{equation}
Combining this with~\eqref{2.15}, we see that
$$
\|\varPhi_v(s+T)\|_\HH\le\frac12\bigl\|[v_0,v_1]\bigr\|_\HH.
$$
This inequality is equivalent to~\eqref{2.8} with $\beta=T^{-1}\log 2$. 

\medskip
We now turn to the accurate proof. The derivation of inequality~\eqref{2.15} is classical (e.g., see Section~6 in~\cite[Chapter~2]{BV1992}), and we shall confine ourselves to the construction of~$w$. To simplify notation, we shall assume that $s=0$; the case $s>0$ can be treated by a literal repetition of the argument used for $s=0$.

\smallskip
{\it Step~1}. 
We seek~$\eta$ in the form 
\begin{equation} \label{2.21}
\eta(t,x)=\chi(x){\mathsf P}_m\bigl(\zeta(t, \cdot)\bigr),
\end{equation}
where $\zeta\in L^2(J_T\times\Omega)$ is an unknown function and $m\ge1$ is an integer that will be chosen later. Let us define the space 
$$
\XX_T=C(J_T,H_0^1)\cap C^1(J_T,L^2)\cap W^{2,2}(J_T,H^{-1})
$$
and consider the following minimisation problem: 

\begin{problem} \label{p2.5}
Given initial data $[v_0,v_1]\in\HH$ and (small) positive numbers~$\delta$ and~$\sigma$, minimise the functional
$$
\JJ(w,\zeta)=\frac12\int_0^{T}\|\zeta(t,\cdot)\|_\sigma^2\,dt
+\frac1\delta\bigl(\|\nabla {\mathsf P}_Nw(T)\|^2+\|{\mathsf P}_N\dot w(T)\|^2\bigr)
$$
in the class of functions $(w,\zeta)\in\XX_T\times L^2(J_T,H^{\sigma})$ satisfying Eqs.~\eqref{2.16}, \eqref{2.18} with $s=0$ and~$\eta$ given by~\eqref{2.21}.
\end{problem}
This is a linear-quadratic optimisation problem, and it is straightforward to prove the existence and uniqueness of an optimal solution, which will be denoted by~$(w,\zeta)$. The mapping $z\mapsto (w,\zeta)$ is linear, and therefore so is the mapping $[v_0,v_1]\mapsto \eta$. Let us derive some estimates for the norms of~$w$ and~$\zeta$. To this end, we write the optimality conditions:
\begin{align}
\ddot q-\gamma\dot q-\Delta q+b(t,x)q&=0,\label{2.22}\\
(-\Delta)^\sigma\zeta&={\mathsf P}_m(\chi q), \label{2.23}\\
q(T)=-\frac{2}{\delta}\,{\mathsf P}_N\dot w(T), \quad 
\dot q(T)&=-\frac{2}{\delta}\,{\mathsf P}_N\bigl(\Delta w(T)+\gamma\dot w(T)\bigr),
\label{2.24}
\end{align}
where $q\in L^2(J_T,H_0^1)$ is a Lagrange multiplier. Note that, in view of~\eqref{2.22} and the uniqueness of a solution for the linear wave equation, the function~$q$ must belong to~$\XX_T$, so that relations~\eqref{2.24} make sense. Let us take the scalar product in~$L^2(J_T\times\Omega)$ of Eqs.~\eqref{2.16} and~\eqref{2.22} with the functions~$q$ and~$w$, respectively, and take the difference of the resulting equations. After some simple transformations, for $\sigma\in(0,\frac12)$ we obtain
$$
(\dot w(T)+\gamma w(T),q(T))-(w(T),\dot q(T))+\int_0^T(bz,q)\,dt-\int_0^T\bigl(\zeta,{\mathsf P}_m(\chi q)\bigr)\,dt=0. 
$$
Using~\eqref{2.23} and~\eqref{2.24}, we obtain
\begin{equation} \label{2.25}
\frac2\delta\bigl(\|{\mathsf P}_Nw(T)\|_1^2+\|{\mathsf P}_N\dot w(T)\|^2\bigr)+\int_0^T\bigl\|{\mathsf P}_m(\chi q)\bigr\|_{-\sigma}^2\,dt
=\int_0^T(bz,q)\,dt. 
\end{equation}
There is no loss of generality in assuming that~$T$ is so large that inequality~\eqref{2.07} holds and, hence, the  truncated observability inequality~\eqref{3.10} is true for small~$\sigma>0$. Combining this with~\eqref{1.17}, \eqref{1.16}, and~\eqref{2.14} we derive
\begin{align*}
|(bz,q)|&\le \|bz\|_\sigma\|q\|_{-\sigma}
\le C\|z\|_1\|\varPhi_q(0)\|_{\HH^{-\sigma}}\\
&\le C\|\varPhi_z(0)\|_{\HH}\,\bigl\|{\mathsf P}_m(\chi q)\bigr\|_{L^2(J_T,H^{-\sigma})}.
\end{align*}
The Cauchy--Schwarz inequality now implies that 
$$
\biggl|\int_0^T(bz,q)\,dt\biggr|
\le \frac12\int_0^T\bigl\|{\mathsf P}_m(\chi q)\bigr\|_{-\sigma}^2\,dt
+C\bigl(\|v_0\|_1^2+\|v_1\|^2\bigr).
$$
Substituting this into~\eqref{2.25}, we obtain
\begin{equation} \label{2.26}
\frac1\delta\bigl(\|{\mathsf P}_Nw(T)\|_1^2+\|{\mathsf P}_N\dot w(T)\|^2\bigr)+\int_0^T\bigl\|{\mathsf P}_m(\chi q)\bigr\|_{-\sigma}^2\,dt
\le C\,\bigl\|[v_0,v_1]\bigr\|_\HH^2,
\end{equation}
where $m=m(N)\ge1$ is an integer and $C$ is a constant  not depending on~$\delta$ and~$N$. Taking $N\gg1$ and $\delta\ll1$, we obtain the second inequality in~\eqref{2.19} with $s=0$.

\smallskip
{\it Step~2}. 
Let un prove the boundedness of the operator $\Theta_0:[v_0,v_1]\mapsto\eta$ from~$\HH$ to~$L^2(J_T,H^\sigma)$ and inequality~\eqref{2.20} with $s=0$. This will complete the proof of Proposition~\ref{p2.4}. 

It follows from~\eqref{2.23} and~\eqref{2.26} that
$$
\|\zeta\|_{L^2(J_T,H^\sigma)}\le C\,\bigl\|[v_0,v_1]\bigr\|_\HH. 
$$
Since the projection~${\mathsf P}_m$ and multiplication by~$\chi$ are bounded operators in~$H^\sigma$, the above inequality combined with relation~\eqref{2.21} shows that~$\Theta_0$ is bounded. 

To prove~\eqref{2.20}, we write
\begin{align*}
\|\varPhi_w(T)\|_{\HH}
&\le \|{\mathsf P}_N\varPhi_w(T)\|_{\HH}
+\|(I-{\mathsf P}_N)\varPhi_w(T)\|_{\HH}\\
&\le \e\,\bigl\|[v_0,v_1]\bigr\|_\HH
+\delta_N(\sigma)\|\varPhi_w(T)\|_{\HH^\sigma},
\end{align*}
where $\delta_N(\sigma)\to0$ as $N\to\infty$. It follows that~\eqref{2.20} will be established if we prove the first inequality in~\eqref{2.19}. 

Duhamel representation for solutions of~\eqref{2.16}--\eqref{2.18} and inequality~\eqref{1.15} with $s=\sigma$ imply that 
$$
\|\varPhi_w(T)\|_{\HH^\sigma}\le C\,\bigl\|\eta-bz-bw\bigr\|_{L^1(J_T,H^\sigma)}. 
$$
Combining this with~\eqref{1.17} and using condition~\eqref{2.5} and the boundedness of~$z$ and~$w$ in $C(J_T,H_0^1)$, we arrive at the required inequality. 
\end{proof}

\subsection{Dynamic programming principle and feedback law}
\label{s2.3}
Once the existence of a stabilising control is established, an exponentially stabilising feedback law can be constructed using a standard approach based on the dynamic programming principle. Since the corresponding argument was carried out in detail for the more complicated case of the Navier--Stokes system (see Section~3 in~\cite{BRS-2011}), we shall omit some of the proofs. Let us consider the following optimisation problem depending on the parameter~$s\ge0$. 

\begin{problem} \label{p2.6}
Given $[v_0,v_1]\in\HH$ and $\beta>0$, minimise the functional
$$
\II_s(v,\zeta)=\frac12\int_s^\infty e^{\beta t}
\bigl(\|\nabla v(t)\|^2+\|\dot v(t)\|^2+\|\zeta(t)\|^2\bigr)\,dt
$$
in the class of functions $(v,\zeta)$ such that
$$
v\in C(\R_s,H_0^1)\cap C^1(\R_s,L^2)\cap W_{\rm loc}^{2,2}(\R_s,H^{-1}), \quad
\zeta\in L_{\rm loc}^2(\R_s,L^2),
$$
and Eqs.~\eqref{2.1} and~\eqref{2.6} hold with~$\eta$ given by~\eqref{2.21}. 
\end{problem}
This is a linear-quadratic optimisation problem, and in view of Corollary~\ref{c2.5}, there is at least one admissible pair~$(v,\zeta)$ for which $\II_s(v,\zeta)<\infty$. It follows that there is a unique optimal solution $(v^s,\zeta^s)$ for Problem~\ref{p2.6}, and the corresponding optimal cost can be written as
$$
\II_s(v^s,\zeta^s)=\frac12\bigl(Q_s[v_0,v_1],[v_0,v_1]\bigr)_\HH\,,
$$
where $Q_s:\HH\to\HH$ is a bounded positive  operator in the Hilbert space~$\HH$. Moreover, repeating the argument used in the proof of Lemma~3.8 in~\cite{BRS-2011}, one can prove that~$Q_s$ continuously depends on~$s$ in the weak operator topology, and its norm satisfies the inequality
\begin{equation} \label{2.27}
\|Q_s\|_{\LL(\HH)}\le C\,e^{\beta s}, \quad s\ge0.
\end{equation}
We now consider the following problem depending on the parameter $s>0$. 

\begin{problem} \label{p2.7}
Given $[v_0,v_1]\in\HH$ and $\beta>0$, minimise the functional
$$
\KK_s(v,\zeta)=\frac12\int_0^s e^{\beta t}
\bigl(\|\nabla v(t)\|^2+\|\dot v(t)\|^2+\|\zeta(t)\|^2\bigr)\,dt
+\frac12\bigl(Q_s[v_0,v_1],[v_0,v_1]\bigr)_\HH
$$
in the class of functions $(v,\zeta)$ such that
$$
v\in C(J_s,H_0^1)\cap C^1(J_s,L^2)\cap W^{2,2}(J_s,H^{-1}), \quad
\zeta\in L^2(J_s,L^2),
$$
and Eqs.~\eqref{2.1} and~\eqref{1.12} hold with~$\eta$ given by~\eqref{2.21}. 
\end{problem}
This is a linear-quadratic optimisation problem, which has a unique solution~$(\tilde v^s,\tilde \zeta^s)$. The following lemma establishes a link between Problems~\ref{p2.6} and~\ref{p2.7}. Its proof repeats the argument used in~\cite{BRS-2011} (see Lemma~3.10) and is omitted. 

\begin{lemma} \label{p2.8}
Let $(v,\zeta)=(v^0,\zeta^0)$ be the unique solution of Problem~\ref{p2.6} with $s=0$. Then the restriction of~$(v,\zeta)$ to the interval~$J_s$ coincides with~$(\tilde v^s,\tilde \zeta^s)$ and the restriction of~$(v,\zeta)$ to the half-line~$\R_s$ coincides with~$(v^s,\zeta^s)$ corresponding to the initial data~$[v(s),\dot v(s)]$. 
\end{lemma}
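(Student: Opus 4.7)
The statement is the classical dynamic programming principle for the linear--quadratic problem at hand, and I would prove it by a standard concatenation/uniqueness argument. Let $(v,\zeta)=(v^0,\zeta^0)$ denote the optimal pair of Problem~\ref{p2.6} with $s=0$ and initial data $[v_0,v_1]$, and fix $s>0$. Since the integrand in $\II_0$ is nonnegative and additive in time, the cost splits as
\[
\II_0(v,\zeta) \;=\; \tfrac12\int_0^s e^{\beta t}\bigl(\|\nabla v\|^2+\|\dot v\|^2+\|\zeta\|^2\bigr)dt \;+\; \II_s\bigl(v|_{\R_s},\zeta|_{\R_s}\bigr).
\]

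First I would establish the second assertion, namely that the tail $(v|_{\R_s},\zeta|_{\R_s})$ is optimal for Problem~\ref{p2.6} starting at time~$s$ with data $[v(s),\dot v(s)]$. Arguing by contradiction, if some admissible pair $(v',\zeta')$ on $\R_s$ with these initial data at time~$s$ had strictly smaller value of $\II_s$, then the concatenation of $(v|_{J_s},\zeta|_{J_s})$ with $(v',\zeta')$ at $t=s$ would be admissible for Problem~\ref{p2.6} at time~$0$ and would strictly decrease $\II_0$, contradicting the optimality of $(v,\zeta)$. The matching of traces at~$s$ together with the well-posedness provided by Proposition~\ref{p1.3} ensures that the concatenated function lies in $C(\R_+,H_0^1)\cap C^1(\R_+,L^2)\cap W^{2,2}_{\rm loc}(\R_+,H^{-1})$ and indeed solves~\eqref{2.1}. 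Uniqueness of the optimal solution for Problem~\ref{p2.6} then identifies $(v|_{\R_s},\zeta|_{\R_s})$ with $(v^s,\zeta^s)$, yielding the second assertion and the identity
\[
\II_s\bigl(v|_{\R_s},\zeta|_{\R_s}\bigr) \;=\; \tfrac12\bigl(Q_s[v(s),\dot v(s)],[v(s),\dot v(s)]\bigr)_\HH.
\]

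For the first assertion, I would use this identity to rewrite $\II_0(v,\zeta)=\KK_s(v|_{J_s},\zeta|_{J_s})$, reading the terminal penalty in $\KK_s$ at the state reached at time~$s$. Conversely, any admissible pair $(\hat v,\hat\zeta)$ for Problem~\ref{p2.7} can be prolonged to $\R_+$ by appending the unique optimal solution of Problem~\ref{p2.6} started at time~$s$ from data $[\hat v(s),\dot{\hat v}(s)]$. The prolongation is admissible for Problem~\ref{p2.6} with data $[v_0,v_1]$, and its $\II_0$-cost equals, by definition of $Q_s$, exactly $\KK_s(\hat v,\hat\zeta)$. Taking the infimum one obtains $\inf\KK_s \le \II_0(v,\zeta)=\KK_s(v|_{J_s},\zeta|_{J_s})$, while the opposite inequality follows from the tail-optimality established above. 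Hence $(v|_{J_s},\zeta|_{J_s})$ minimises $\KK_s$, and uniqueness of its minimiser delivers $(v|_{J_s},\zeta|_{J_s})=(\tilde v^s,\tilde\zeta^s)$.

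The argument is entirely standard; the only point requiring a bit of care is the regularity of concatenated trajectories at the joining time~$s$, which is controlled by the results of Section~\ref{s1.2}. Since the author explicitly indicates that the proof parallels Lemma~3.10 of~\cite{BRS-2011}, I expect no genuine obstacle beyond this bookkeeping.
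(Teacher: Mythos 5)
Your argument is correct and is exactly the standard dynamic-programming/concatenation proof that the paper itself omits by referring to Lemma~3.10 of~\cite{BRS-2011}: cost additivity, tail optimality by contradiction, and identification of the restriction to~$J_s$ via the extension of admissible pairs for Problem~\ref{p2.7}, with uniqueness of minimisers closing both halves. You also correctly read the terminal penalty in~$\KK_s$ as being evaluated at the state $\varPhi_v(s)$ reached at time~$s$ (the paper's displayed formula writes $[v_0,v_1]$ there, evidently a typo), and your remark on the regularity of the concatenated trajectory at the junction is the right point of care.
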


The optimality conditions for the restriction of~$(v,\zeta)$ to~$J_s$ imply, in particular, that
\begin{align}
\ddot q_s-\gamma\dot q_s-\Delta q_s+b(t,x)q_s
&=e^{\beta t}\bigl(\Delta v+\beta\dot v+\ddot v\bigr),\label{2.28}\\
\zeta(t)&=e^{-\beta t}{\mathsf P}_m(\chi q_s(t)), \label{2.29}\\
q_s(s)&=-Q_s^1\varPhi_v(s),
\label{2.30}
\end{align}
where $q_s\in L^2(J_s,H_0^1)$ is a Lagrange multiplier and $Q_s^1:\HH\to L^2$ is a continuous operator defined by the relation $Q_sV=[Q_s^0V,Q_s^1V]$ for $V\in\HH$. Since the right-hand side of~\eqref{2.28} belongs to $L^2(J_s,H^{-1})$, it follows from Proposition~\ref{p1.3} with $\sigma=0$ that the function~$q_s$ must belong to $C(J_s,L^2)$, so that relation~\eqref{2.30} makes sense, and~$\zeta$ is a continuous function of time with range in~$\FF_m$. Combining~\eqref{2.29} and~\eqref{2.30}, we see that
$$
\zeta(s)=-e^{-\beta s}{\mathsf P}_m\bigl(\chi Q_s^1\varPhi_v(s)\bigr). 
$$
Recalling that $s>0$ is arbitrary and using~\eqref{2.21}, we conclude that the unique optimal solution~$(v,\zeta)$ of Problem~\ref{p2.6} with $s=0$ satisfies Eq.~\eqref{2.1} with $\eta(t,x)=K_b(t)\varPhi_v(t)$, where the linear operator $K_b(t):\HH\to\FF_m$ is given by
\begin{equation} \label{2.31}
K_b(t)V=-e^{-\beta t}\chi\,{\mathsf P}_m\bigl(\chi Q_t^1V\bigr), \quad t\ge0.
\end{equation}
In the next section, we shall show that this operator satisfies all the properties mentioned in Theorem~\ref{t2.1}. 

\subsection{Conclusion of the proof of Theorem~\ref{t2.1}}
\label{s2.4}
The continuity of the function $t\mapsto K_b(t)$ in the weak operator topology follows from a similar property for~$Q_t$, and the uniform boundedness of its norm is an immediate consequence of~\eqref{2.27}. To establish~\eqref{2.7}, we first consider the case~$s=0$. Let us define $w(t,x)=e^{\beta t/2}v(t,x)$. Then there is $C>1$ such that
\begin{equation} \label{2.32}
C^{-1}E_w(t)\le e^{\beta t}E_v(t)\le C\,E_w(t), \quad t\ge0. 
\end{equation}
Furthermore, the function~$w$ must satisfy the equation
\begin{equation} \label{2.33}
\ddot w+\gamma \dot w-\Delta w=g(t,x),
\end{equation}
where we set
$$
g(t,x)=e^{\beta t/2}\Bigl(K_b(t)\varPhi_v(t)
+\bigl(\tfrac{\beta^2}{4}+\tfrac{\gamma\beta}{2}-b\bigr)v+\beta\dot v\Bigr). 
$$
Note that $\|g(t)\|^2\le C\,e^{\beta t}E_v(t)$, and since~$v$ is the optimal solution of Problem~\ref{p2.6} with $s=0$, we have
\begin{equation} \label{2.34}
\int_0^\infty\|g(t)\|^2\,dt\le C\,\bigl(Q_0[v_0,v_1],[v_0,v_1]\bigr)_\HH
\le C\,E_v(0). 
\end{equation}
Taking the scalar product in~$L^2$ of Eq.~\eqref{2.33} with $2\dot w+\alpha w$, carrying out some standard transformations, and using the Gronwall inequality, we derive
$$
E_w(t)\le Ce^{-\delta t}E_w(0)
+C\int_0^te^{-\delta(t-\theta)}\|g(\theta,\cdot)\|^2d\theta.
$$
Combining this with~\eqref{2.32} and~\eqref{2.34}, we arrive at the required inequality~\eqref{2.7} with $s=0$. 

\smallskip
To prove~\eqref{2.7} with an arbitrary $s=\theta>0$, we repeat the above argument with the initial point moved to~$\theta$. Namely, considering an analogue of Problem~\ref{p2.7} on the half-line~$\R_\theta$, one can prove by the same argument as above that 
$$
\zeta^\theta(t)
=-e^{-\beta t}{\mathsf P}_m\bigl(\chi Q_t^1\varPhi_v^\theta(t)\bigr),
\quad t>\theta. 
$$
It follows that if~$v$ is the solution of~\eqref{2.1}, \eqref{2.6} with  $\eta(t)=K_b(t)\varPhi_v(t)$ and $s=\theta$, then 
\begin{align*}
\bigl(Q_\theta[v_0,v_1],[v_0,v_1]\bigr)_\HH
&=\frac12\int_\theta^\infty \Bigl(e^{\beta t}E_v(t)+e^{-\beta t}\|{\mathsf P}_m\bigl(\chi Q_t^1\varPhi_v(t)\bigr)\|^2\Bigr)\,dt\\
&\le C\,e^{\beta \theta}E_v(\theta). 
\end{align*}
We can now establish~\eqref{2.7} by literal repetition of the argument used above for problem on the half-line~$\R_+$. 

\smallskip
It remains to establish the uniqueness of solution. Let~$v(t,x)$ be a function that belongs to the space $C(\R_+,H_0^1)\cap C^1(\R_+,L^2)$ and satisfies Eqs.~\eqref{2.1} and~\eqref{2.6} with $\eta(t)=K_b(t)\varPhi_v(t)$ and $v_0=v_1\equiv0$. Since $\eta\in L_{\rm loc}^1(\R_+,L^2)$, inequality~\eqref{1.16} and the boundedness of the operator~$K_b(t)$  imply that
$$
\|\varPhi_v(t)\|_\HH\le C\int_0^t\|\varPhi_v(s)\|_\HH\,ds.
$$
By the Gronwall inequality, we conclude that $v\equiv0$. The proof of Theorem~\ref{t2.1} is complete. 

\section{Observability inequalities}
\label{s3}
This section is devoted to the proof of a truncated observability inequality used in Section~\ref{s2.2}. Namely, let us consider the homogeneous equation
\begin{equation} \label{3.1}
\ddot v-\gamma\dot v-\Delta v+b(t,x)v=0,\quad x\in \Omega,
\end{equation}
supplemented with the Dirichlet boundary condition~\eqref{2.2}. 
We first establish a ``full'' observability inequality for solutions of low regularity and then use a compactness argument to derive the required result. 

\subsection{Observability of low-regularity solutions}
\label{s3.1}

\begin{theorem} \label{t3.1}
Let the hypotheses of Theorem~\ref{t2.1} be fulfilled, let $\chi\in C_0^\infty(\R^3)$ be  such that $\supp\chi\cap\Omega\subset\omega$ and $\chi(x)=1$ for $x\in\Omega_{\delta/2}(x_0)$, and let 
\begin{equation} \label{2.07}
T>2\sup_{x\in\Omega}|x-x_0|,
\end{equation}
where $x_0\in\R^3$ is the point entering Condition~\ref{c2.2}. Then there are positive constants $\sigma_0(r)$ and $M_6=M_6(R,r,T,\gamma,\chi)$ such that, for any initial data $[v_0,v_1]\in\HH^{-\sigma}$ with $0\le\sigma\le \sigma_0(r)$ the solution $v(t,x)$ of problem~\eqref{3.1}, \eqref{2.2}, \eqref{1.12} satisfies the inequality
\begin{equation} \label{3.2}
 \|v_0\|^2_{-\sigma}+\|v_1\|^2_{-\sigma-1}
 \leq M_6\int_0^T \|\chi v(t)\|^2_{-\sigma}\,dt.
\end{equation}
\end{theorem}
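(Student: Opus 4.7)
The plan is to combine three classical ingredients: the Bardos--Lebeau--Rauch (BLR) observability inequality for the free damped wave equation under the geometric conditions~\ref{c2.1}--\ref{c2.2}, the commutator estimate of Lemma~\ref{L:com2} to absorb the rough potential $b$, and a compactness--uniqueness argument to eliminate a residual lower-order term by unique continuation.

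First, for $b\equiv 0$ the damping term $\gamma\dot v$ is a lower-order perturbation, Condition~\ref{c2.1} guarantees that the generalised bicharacteristic flow on $\overline\Omega$ is well defined, and Condition~\ref{c2.2} with $T>2\sup_{x\in\Omega}|x-x_0|$ ensures that every bicharacteristic of length $T$ meets $\Omega_{\delta/2}(x_0)\subset\{\chi=1\}$. The BLR theorem then yields observability at the $\HH$ level. Applying this inequality to $(-\Delta)^{-\sigma/2}v$, which still solves the free equation, and comparing $\chi(-\Delta)^{-\sigma/2}v$ with $(-\Delta)^{-\sigma/2}(\chi v)$ via the commutator $[\chi,(-\Delta)^{-\sigma/2}]$ (which gains one order of regularity and hence produces a compact remainder), one obtains, for every $\sigma\ge 0$,
$$
\|v_0\|_{-\sigma}^2+\|v_1\|_{-\sigma-1}^2\le C\int_0^T\|\chi v\|_{-\sigma}^2\,dt+C\,\|v\|_{L^2(J_T,H^{-\sigma-\kappa})}^2
$$
for some $\kappa>0$, as a preliminary estimate for the free equation.

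To accommodate a general potential $b\in L^\infty(J_T,L^\infty\cap H^r)$, I would introduce a semiclassical frequency cutoff $\psi(-h^2\Delta)$ and observe that $v_h=\psi(-h^2\Delta)v$ satisfies the same equation with an inhomogeneity $F_h=[\psi(-h^2\Delta),b]v$. Lemma~\ref{L:com2} supplies the essential bound $\|F_h\|_{H^{-1}}\le M_4\,h^\sigma\,\|v\|_{-\sigma}$, which, after a Duhamel-type decomposition and a dyadic summation over $h=2^{-j}$, can be absorbed into the same compact remainder as above. A compactness--uniqueness argument then removes this last term: if the claimed inequality fails, normalise a sequence $[v_0^n,v_1^n]$ in $\HH^{-\sigma}$ with vanishing observation, apply Proposition~\ref{p1.3} together with the Aubin--Lions lemma to extract a subsequential limit $v^\infty$ solving~\eqref{3.1}, \eqref{2.2} with $\chi v^\infty\equiv 0$ on $(0,T)\times\omega$, and invoke unique continuation for the wave equation with an $L^\infty\cap H^r$ potential to conclude $v^\infty\equiv 0$, contradicting the normalisation.

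The main obstacle is the dyadic absorption of the commutator: the low regularity of both the solution (initial data in $\HH^{-\sigma}$) and of $b$ forces one to work frequency-by-frequency and to exploit the precise $h^\sigma$ gain from Lemma~\ref{L:com2}, so that $\sigma_0(r)>0$ must be chosen small enough both for the induced geometric series to converge and for Proposition~\ref{p1.3} to remain applicable. The unique continuation step, in turn, requires enough Sobolev control on $b$ (encoded in $r>0$) to justify Carleman estimates for the perturbed wave equation.
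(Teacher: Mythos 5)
Your overall architecture --- observability for the free equation at negative Sobolev index, perturbation by the potential, and a compactness--uniqueness step resting on unique continuation --- matches the paper's, but two of your key steps have genuine gaps.

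First, the dyadic absorption of the potential does not work as described. Lemma~\ref{L:com2} gives $\|[\psi(-h^2\Delta),b]v\|_{-1}\le M_4 h^\sigma\|v\|_{-\sigma}$, where the right-hand side involves the \emph{full} norm $\|v\|_{-\sigma}$, not a frequency-localised piece of it. Summing the squared errors over $h=2^{-j}$ therefore produces a contribution of order $\sum_j 4^{-j\sigma}\,\|v\|^2_{L^\infty(J_T,H^{-\sigma})}\le C(1-4^{-\sigma})^{-1}\bigl\|[v_0,v_1]\bigr\|^2_{\HH^{-\sigma}}$, i.e.\ a quantity of exactly the size of the left-hand side of \eqref{3.2}, with a constant that is neither small (it blows up as $\sigma\to0$) nor attached to a compactly embedded norm. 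It can be neither absorbed into the left-hand side nor relegated to the compact remainder, so the "geometric series" argument does not close. The paper avoids any frequency decomposition at this stage: it writes the solution by Duhamel with $bv$ as a source and uses the bilinear estimate \eqref{1.18}, which shows that multiplication by $b\in L^\infty\cap H^r$ maps $H^{-\sigma-\delta}$ into $H^{-\sigma-1}$ for $\sigma,\delta$ small --- the source term genuinely \emph{gains} regularity --- and then the interpolation $\|v\|_{-\sigma-\delta}\le\e\|v\|_{-\sigma}+C_\e\|v\|_{-\sigma-1}$ yields the high-frequency estimate \eqref{3.7} with a truly compact remainder.

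Second, your unique continuation step is asserted rather than proved, and the assertion is problematic. The limit produced by the compactness argument lies only in $C(J_T,H^{-\sigma})\cap C^1(J_T,H^{-\sigma-1})$, and Carleman estimates for the wave equation do not apply directly to distributional solutions of this regularity; for a merely bounded, time-dependent potential and the near-optimal time \eqref{2.07}, this is precisely the delicate point. The paper's Step~1 supplies the missing argument: regularise by $\varphi(-h^2\Delta)$, compare $\varphi(-h^2\Delta)v$ with the solution $v^h$ of \eqref{3.1} issued from the regularised data, control the discrepancy in $\HH^0$ by $O(h^\sigma)$ via Lemma~\ref{L:com2} and Proposition~\ref{p1.3}, and apply the already established $L^2$-level observability inequality for $L^\infty$ potentials (Duyckaerts--Zhang--Zuazua) to $v^h$; the commutator $[\chi,\varphi(-h^2\Delta)]$, estimated by Lemma~\ref{L:com1}, makes $\chi\varphi(-h^2\Delta)v$ of order $h^{1-\sigma}$ when $\chi v=0$, and letting $h\to0$ gives $v_0=v_1=0$. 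Without some such regularisation scheme your contradiction argument does not terminate.
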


\begin{proof}
The proof is divided into three steps: we first establish a unique continuation property for low-regularity solutions; we then use the Bardos--Lebeau--Rauch observability inequality to establish a high-frequency observability; and, finally, these two results are combined to prove the required observability inequality. Without loss of generality, we shall assume that $\gamma=0$; the general case can easily be treated by the change of variable $v(t)=e^{\gamma t/2}w(t)$. 

\medskip
{\it Step~1. Unique continuation property}. 
Let $\sigma\in(0,1)$ be so small that the initial-boundary value problem for~\eqref{3.1} is well posed in~$\HH^{-\sigma}$ and the conclusion of Lemma~\ref{L:com2} is true. We claim that if a solution $v(t,x)$ of problem~\eqref{3.1}, \eqref{2.2}, \eqref{1.12} with initial data $[v_0,v_1]\in\HH^{-\sigma}$ is such that $\chi v=0$ on $J_T\times\Omega$, then $v\equiv0$. Indeed, let $\varphi\in C_0^{\infty}(\R)$ be such that $\varphi=1$ around $0$. For $h\in (0,1]$, let~$v^h$ be the solution of~\eqref{3.1}, \eqref{2.2}  with the initial condition
$$
v^h\bigr|_{t=0}=\varphi(-h^2\Delta)v_0,\quad 
\partial_tv^h\bigr|_{t=0}=\varphi(-h^2\Delta)v_1.
$$
We set $u^h=v^h-\varphi(-h^2\Delta)v$. Then
\begin{equation}
 \label{eq_vh}
\partial_t^2u^h-\Delta u^h+b(t,x)u^h=[\varphi(-h^2\Delta),b]v,
\quad [u^h,\partial_tu^h]\bigr|_{t=0}=0.
\end{equation} 
By Proposition~\ref{p1.3} (with $\sigma=0$) and Lemma~\ref{L:com2}, for all $t\in J_T$ we have
\begin{align}
\|\varPhi_{u^h}(t)\|_{\HH^0}
&\leq M_2\int_0^T \left\|\left[\varphi(-h^2\Delta),b(s)\right]v(s)\right\|_{-1}\,ds
\notag\\
&\leq M_2M_4\int_0^T h^{\sigma}\|v(s)\|_{-\sigma}\,ds
\le C\,h^\sigma\|[v_0,v_1]\|_{\HH^{-\sigma}}.\label{3.5}
\end{align}
Suppose we have shown that
\begin{equation} \label{3.4}
\|\varPhi_{v^h}(t)\|_{\HH^0}\le C\,\bigl(h^\sigma+h^{1-\sigma}\bigr)
\bigl\|[v_0,v_1]\bigr\|_{\HH^{-\sigma}}.
\end{equation}
Then these two inequalities imply that
$$
\lim_{h\to 0}
\bigl(\|\varphi(-h^2\Delta)v_0\|+\|\varphi(-h^2\Delta)v_1\|_{-1}\bigr)=0,
$$
whence we conclude that $v_0=v_1=0$ and, hence, $v\equiv0$. 

\smallskip
To prove~\eqref{3.4}, recall that, by~\cite{DZZ-2008}, the observability inequality~\eqref{3.2} is true with $\sigma=0$. Combining this with~\eqref{1.16}, we derive
\begin{align}
\left\|\varPhi_{v^h}(t)\right\|_{\HH^0}^2
&\le M_2^2 \left\|\varPhi_{v^h}(0)\right\|_{\HH^0}^2
\leq M_2^2M_6\int_0^{T} \|\chi v^h(s)\|^2\,ds\notag\\
&\leq C\int_0^{T}\left\|\chi u^h(s)\right\|^2\,ds
+C\int_0^T \left\|\chi\,\varphi(-h^2\Delta)\,v(s)\right\|^2\,ds. 
\label{est_uniq2}
\end{align} 
By~\eqref{3.5}, the first term on the right-hand side does not exceed $Ch^{2\sigma}\|[v_0,v_1]\|_{\HH^{-\sigma}}^2$. To estimate the second term, we write
$$
\chi\,\varphi(-h^2\Delta)\,v
=\underbrace{\varphi(-h^2\Delta)\chi v}_{=0}+\left[\chi,\varphi(-h^2\Delta)\right]v.
$$
Using Lemma~\ref{L:com1} and Proposition~\ref{p1.3},  we get 
\begin{equation*}
 \label{est_uniq3}
\|\chi \varphi(-h^2\Delta)v(t)\|\leq M_3 h^{1-\sigma}\|v(t)\|_{-\sigma}
\leq M_3M_2h^{1-\sigma}\|[v_0,v_1]\|_{\HH^{-\sigma}}.
\end{equation*} 
Substituting these estimates into~\eqref{est_uniq2}, we obtain the required inequality~\eqref{3.4}.

\smallskip
{\it Step~2. High-frequency observability}. 
We now prove the following weaker version of~\eqref{3.2}:
\begin{equation} \label{3.7}
\sup_{t\in J_T}\bigl\|\varPhi_v(t)\bigr\|_{\HH^{-\sigma}}^2
 \leq C\biggl(\int_0^T \|\chi v(t)\|^2_{-\sigma}\,dt
 +\int_0^T\|v(s)\|_{-\sigma-1}^2ds\biggr).
\end{equation}
To this end, recall that inequality~\eqref{3.2} is true\footnote{The paper~\cite{BLR-1992} deals with the boundary control and establishes the observability inequality~\eqref{3.2} in the scale of Sobolev spaces. On the other hand, the paper~\cite{BLR-1988} is devoted to the case of distributed control and proves~\eqref{3.2} with $\sigma=0$. Even though it is commonly accepted that the observability inequality is true for the scale of Sobolev spaces also for a distributed control, we were not able to find an accurate proof in the literature and outline it in Section~\ref{s3.3} for the reader's convenience.} for solutions of Eq.~\eqref{3.1} with $\gamma=b=0$ (see~\cite{BLR-1992,BLR-1988}). Combining that inequality with~\eqref{1.16}, we derive
$$
\sup_{t\in J_T}\bigl\|S(t)[v_0,v_1]\bigr\|_{\HH^{-\sigma}}^2
 \leq C\int_0^T \|\chi S_0(t)[v_0,v_1]\|^2_{-\sigma}\,dt,
$$
where $S(t)$ and $S_0(t)$ are defined in Section~\ref{s1.2}. 
Using now~\eqref{Duhamel_b} with $\eta\equiv0$, we see that the solution of~\eqref{3.1}, \eqref{2.2}, \eqref{1.12} satisfies the inequality
\begin{align} 
\sup_{t\in J_T}\bigl\|\varPhi_v(t)\bigr\|_{\HH^{-\sigma}}^2
&\le C\int_0^T \biggl\|\chi \Bigl(v(t)-\int_0^tS_0(t-s)[0,(bv)(s)]\,ds\Bigr)\biggr\|^2_{-\sigma}\,dt\notag\\
&\le C\int_0^T \|\chi v(t)\|_{-\sigma}^2dt
+C\int_0^T\|(bv)(t)\|_{-\sigma-1}^2dt.\label{3.8}
\end{align}
If $\sigma>0$ is sufficiently small, then inequalities~\eqref{1.18} and~\eqref{2.5}  and compactness of the embedding $H^{-\sigma}\subset H^{-\sigma-\delta}$ for $\delta>0$ imply that
$$
\|(bv)(t)\|_{-\sigma-1}\le C\,\|v(t)\|_{-\sigma-\delta}
\le\e\,\|v(t)\|_{-\sigma}+C_\e\|v(t)\|_{-\sigma-1},
$$
where $\e>0$ can be chosen arbitrarily small. Substituting this into~\eqref{3.8}, we arrive at the required inequality~\eqref{3.7}.  

\smallskip
{\it Step~3. Conclusion of the proof}. 
We now argue by contradiction. Suppose there is a sequence of solutions $(v^n)$ for~\eqref{3.1}, \eqref{2.2}, \eqref{1.12} such that 
\begin{equation} \label{3.9}
\|\varPhi_{v^n}(0)\|_{\HH^{-\sigma}}=1, \quad 
\int_0^T\|\chi v^n(t)\|_{-\sigma}^2dt\le n^{-1}\quad\mbox{for $n\ge1$}.
\end{equation}
Without loss of generality, we can assume that 
\begin{align*}
v^n&\to v \quad\mbox{weakly$^*$ in $L^\infty(J_T,H^{-\sigma})$},\\
\p_tv^n&\to \p_t v \quad\mbox{weakly$^*$ in $L^\infty(J_T,H^{-\sigma-1})$},
\end{align*}
where $v\in X_T^\sigma$ is a solution of~\eqref{3.1}, \eqref{2.2}. 
Combining this with~\eqref{3.9} and inequality~\eqref{3.7} applied to~$v^n$, we obtain
$$
1\le C\int_0^T\|v(s)\|_{-\sigma-1}^2ds,
$$
so that $v\not\equiv0$. On the other hand, it follows from the second relation in~\eqref{3.9} that 
$$
\int_0^T\|\chi v^n(t)\|_{-\sigma-1}^2dt
=\lim_{n\to\infty}\int_0^T\|\chi v^n(t)\|_{-\sigma-1}^2dt=0,
$$
whence we see that $\chi v\equiv0$. By the unique continuation property established in Step~1, we conclude that $v\equiv0$. The contradiction obtained completes the proof of the theorem.
\end{proof}

\subsection{Truncated observability inequality}
\label{s3.2}
Let us recall that~$H_N$ stands for the vector span of $e_1,\dots,e_N$ and~${\mathsf P}_N$ denotes the orthogonal projection in~$L^2$ to~$H_N$. The following result shows that if~$\varPhi_v(T)$ belongs to~$H_N\times H_N$, then in~\eqref{3.2} the function~$\chi v$ can be replaced by its projection to~$H_m$ with a sufficiently large~$m$.  

\begin{theorem} \label{t3.2}
Under the hypotheses of Theorem~\ref{t3.1}, for any $N\ge1$ there is an integer $m\ge1$ such that if $v\in X_T^\sigma$ is a solution of~\eqref{3.1}, \eqref{2.2} satisfying the condition $\varPhi_v(T)\in H_N\times H_N$, then 
\begin{equation} \label{3.10}
 \|\varPhi_v(0)\|_{\HH^{-\sigma}}^2
 \leq 2M_6\int_0^T \|{\mathsf P}_m(\chi v(t))\|^2_{-\sigma}\,dt.
\end{equation}
\end{theorem}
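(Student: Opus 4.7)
The plan is to deduce the truncated inequality directly from the full observability inequality of Theorem~\ref{t3.1} by showing that the high-frequency tail $({\mathsf I}-{\mathsf P}_m)(\chi v)$ can be absorbed into the left-hand side, thanks to the extra regularity $v$ inherits from the finite-dimensional constraint $\varPhi_v(T)\in H_N\times H_N$. First, since $0\le\sigma<\tfrac12$, we have $H^{-\sigma}=H_D^{-\sigma}$, and ${\mathsf P}_m$ is the orthogonal projection onto~$H_m$ in every space $H_D^s$. Hence we have the Pythagorean identity
\begin{equation*}
\|\chi v(t)\|_{-\sigma}^2=\|{\mathsf P}_m(\chi v(t))\|_{-\sigma}^2+\|({\mathsf I}-{\mathsf P}_m)(\chi v(t))\|_{-\sigma}^2.
\end{equation*}
Integrating over $J_T$ and applying Theorem~\ref{t3.1} gives
\begin{equation*}
\|\varPhi_v(0)\|_{\HH^{-\sigma}}^2
\le M_6\int_0^T\!\|{\mathsf P}_m(\chi v)\|_{-\sigma}^2\,dt
+M_6\int_0^T\!\|({\mathsf I}-{\mathsf P}_m)(\chi v)\|_{-\sigma}^2\,dt,
\end{equation*}
so it suffices to bound the second term by $\tfrac12\|\varPhi_v(0)\|_{\HH^{-\sigma}}^2$ for $m$ large enough (depending on $N$).

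To exploit the finite-dimensional constraint, I would first use the forward bound~\eqref{1.16} to get $\|\varPhi_v(T)\|_{\HH^{-\sigma}}\le M_2\|\varPhi_v(0)\|_{\HH^{-\sigma}}$. Since $\varPhi_v(T)\in H_N\times H_N$, and all norms are equivalent on this finite-dimensional subspace, there is a constant $C(N)$ with
\begin{equation*}
\|\varPhi_v(T)\|_{\HH}\le C(N)\,\|\varPhi_v(T)\|_{\HH^{-\sigma}}\le C(N) M_2\,\|\varPhi_v(0)\|_{\HH^{-\sigma}}.
\end{equation*}
I would then run the equation backward in time from $T$ to $0$: the change of variable $t\mapsto T-t$ turns~\eqref{3.1} into an equation of the same form with $\gamma$ replaced by $-\gamma$ and a coefficient $\tilde b$ satisfying the same assumption~\eqref{2.5}, so Proposition~\ref{p1.3} (which allows $\gamma\in\R$) yields
\begin{equation*}
\|v\|_{L^\infty(J_T,H_0^1)}+\|\dot v\|_{L^\infty(J_T,L^2)}\le C_1(N)\,\|\varPhi_v(0)\|_{\HH^{-\sigma}}.
\end{equation*}
Since $\chi\in C_0^\infty(\R^3)$ and $v|_{\p\Omega}=0$, multiplication by $\chi$ is bounded on $H_0^1$, hence $\|\chi v\|_{L^\infty(J_T,H_0^1)}\le C_2(N)\,\|\varPhi_v(0)\|_{\HH^{-\sigma}}$.

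The last ingredient is a Bessel-type tail estimate: for any $f\in H_0^1=H_D^1$,
\begin{equation*}
\|({\mathsf I}-{\mathsf P}_m)f\|_{-\sigma}^2
=\sum_{j>m}(f,e_j)^2\lambda_j^{-\sigma}
\le \lambda_{m+1}^{-(1+\sigma)}\sum_{j>m}(f,e_j)^2\lambda_j
\le \lambda_{m+1}^{-(1+\sigma)}\,\|f\|_1^2.
\end{equation*}
Applying this to $\chi v(t)$ and integrating gives
\begin{equation*}
\int_0^T\|({\mathsf I}-{\mathsf P}_m)(\chi v)\|_{-\sigma}^2\,dt\le T\,C_2(N)^2\,\lambda_{m+1}^{-(1+\sigma)}\,\|\varPhi_v(0)\|_{\HH^{-\sigma}}^2.
\end{equation*}
Since $\lambda_m\to\infty$, one may choose $m=m(N)$ so that $M_6 T\,C_2(N)^2\,\lambda_{m+1}^{-(1+\sigma)}\le\tfrac12$, completing the proof. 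The most delicate point is the backward propagation used to upgrade the $\HH^{-\sigma}$ control of $\varPhi_v$ to an $H_0^1$ control of $v$; it is handled cleanly by time reversal, but one must verify that all hypotheses of Proposition~\ref{p1.3} transfer to the reversed equation.
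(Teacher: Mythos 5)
Your proof is correct, and it takes a genuinely different route from the paper's. The paper reduces the statement to the auxiliary inequality~\eqref{3.11}, comparing $\int_0^T\|\chi v\|^2\,dt$ with $\int_0^T\|\chi v\|^2_{-\sigma}\,dt$ for solutions with $\varPhi_v(T)\in H_N\times H_N$, and proves that inequality by a compactness--contradiction argument: extracting weak$^*$ limits of sequences $(v^n,b^n)$, passing to the limit in the product $b^nv^n$ via Aubin--Lions compactness, and invoking the unique continuation contained in Theorem~\ref{t3.1} to reach a contradiction. You instead absorb the high-frequency tail into the \emph{left-hand} side of the observability inequality, bounding $\int_0^T\|({\mathsf I}-{\mathsf P}_m)(\chi v)\|^2_{-\sigma}\,dt$ directly by $\lambda_{m+1}^{-(1+\sigma)}$ times $\|\varPhi_v(0)\|^2_{\HH^{-\sigma}}$ through the chain: forward estimate~\eqref{1.16} in $\HH^{-\sigma}$, equivalence of norms on the finite-dimensional space $H_N\times H_N$ at $t=T$, and backward propagation in the energy space. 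This is quantitative --- it yields, in principle, an explicit $m(N)$ in terms of $\lambda_N$, $M_2$, $M_6$, $T$ and the energy constant, whereas the compactness argument gives no effective bound on the constant in~\eqref{3.11} --- and it is still uniform over $\barr b\barr\le R$, since all constants used depend on $b$ only through~$R$. Two small points to make the write-up airtight: the energy-space bound you need for the time-reversed equation is not Proposition~\ref{p1.3} itself (whose range is $\sigma\in[0,\sigma_0(r)]$) but the remark following it (the case $\sigma=-1$), combined with uniqueness in $X_T^\sigma$ to identify the given solution with the backward-propagated energy solution; and the exact Pythagorean identity and the exact constant $2M_6$ require working with the spectrally defined norms $H_D^{-\sigma}$ (equivalent to $H^{-\sigma}$ for $\sigma<\tfrac12$), otherwise the constant degrades to $CM_6$.
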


\begin{proof}
We repeat the argument used in~\cite{BRS-2011} for the case of the linearised Navier--Stokes system. It suffices to prove that if $v\in X_T^\sigma$ is a solution of~\eqref{3.1}, \eqref{2.2} satisfying the condition $\varPhi_v(T)\in H_N\times H_N$, then 
\begin{equation} \label{3.11}
\int_0^T \|\chi v(t)\|^2\,dt
\le C \int_0^T \|\chi v(t)\|^2_{-\sigma}\,dt,
\end{equation}
where $C>0$ depends only on~$N$ and~$R$; see Section~A.3 in~\cite{BRS-2011}. We argue by contradiction. 

Suppose there are sequence $(v^n)\subset X_T^\sigma$ and $(b^n)\subset L^\infty(J_T,H^r\cap L^\infty)$ such that
\begin{gather}
\varPhi_{v^n}(T)\in H_N\times H_N, \quad \|\varPhi_{v^n}(T)\|_{\HH^{-\sigma}}=1, \quad \barr b^n\barr\le R,
\label{3.12}\\
\ddot v_n-\gamma\dot v^n-\Delta v^n+b^n(t,x)v^n=0,
\label{3.13}\\
\int_0^T \|\chi v^n(t)\|^2\,dt
\ge n \int_0^T \|\chi v^n(t)\|^2_{-\sigma}\,dt.
\label{3.14}
\end{gather}
Passing to a subsequence, we can assume that there are function~$v$ and~$b$ such that $v\in L^\infty(J_T,H^{-\sigma})$, $\ddot v\in L^\infty(J_T,H^{-\sigma-2})$, $b\in L^\infty(J_T,H^r\cap L^\infty)$, and the following convergences hold: 
\begin{align*}
\varPhi_{v^n}(T)&\to \varPhi_{v}(T),\\
v^n&\to v\quad\mbox{weakly$^*$ in $L^\infty(J_T,H^{-\sigma})$},\\
\dot v^n&\to \dot v\quad\mbox{weakly$^*$ in $L^\infty(J_T,H^{-\sigma-1})$},\\
\ddot v^n&\to \ddot v\quad\mbox{weakly$^*$ in $L^\infty(J_T,H^{-\sigma-2})$},\\
b^n&\to b\quad\mbox{weakly$^*$ in $L^\infty(J_T\times\Omega)$ and $L^\infty(J_T,H^{r})$}.
\end{align*}
It follows from the first two relations in~\eqref{3.12} that $0\ne\varPhi_v(T)\in\HH$ and $\|\varPhi_{v^n}(T)\|_\HH\le C$. By the uniqueness of solution for~\eqref{2.1}, the functions~$v^n$ must belong to~$X_T^0$, and their norms are uniformly bounded. Inequality~\eqref{3.14} now implies that $\|\chi v^n\|_{L^2(J_T,H^{-\sigma})}\to0$, whence we conclude that $\chi v\equiv0$. Suppose we have proved that that we can pass to the limit as $n\to\infty$ in Eq.~\eqref{3.13}. Then the function~$v$ is a solution of the limiting equation~\eqref{3.1} and, hence, the observability inequality~\eqref{3.2} holds for it. It follows that $v\equiv0$ and therefore $\varPhi_v(T)=0$. The contradiction obtained proves the required inequality~\eqref{3.11}.

\smallskip
It remains to prove that one can pass to the limit in~\eqref{3.13}. The only nontrivial term is $b^nv^n$. We need to show that 
\begin{equation} \label{limit}
\langle b^nv^n,\varphi\rangle \to \langle bv,\varphi\rangle 
\quad\mbox{for any $\varphi\in C_0^\infty$},
\end{equation}
when $C_0^\infty$ is considered on the open set $(0,T)\times\Omega$, and  $\langle\cdot,\cdot\rangle$ denotes the duality between the space of distributions and~$C_0^\infty$. It follows from Lemma~5.1 in~\cite{lions1969} the sequence~$\{v^n\}$ is relatively compact in the space $L^p(J_T,H^{-\sigma-\e})$ for any $p\in[1,\infty)$ and $\e>0$. Therefore so is the sequence~$\{v^n\varphi\}$. Choosing~$\sigma$ and~$\e$ so small that $\sigma+\e\le r$, we now write
$$
\langle b^nv^n,\varphi\rangle
=\int_0^T\bigl(b^n(t,\cdot),(v^n\varphi)(t,\cdot)\bigr)\,dt.
$$
Since the weak$^*$ convergence in $L^\infty(J_T,H^r)$ is uniform on compact subsets of~$L^1(J_T,H^{-r})$, the above-mentioned compactness of~$\{v^n\varphi\}$ implies that
$$
\int_0^T(b^n,v^n\varphi)\,dt=
\int_0^T(b^n-b,v^n\varphi)\,dt+\int_0^T(b\varphi,v^n)\,dt
\to \int_0^T(b\varphi,v)\,dt,
$$
where we used the fact that $b\varphi\in L^\infty(J_T,H^r)\subset L^1(J_T,H^\sigma)$. 
This completes the proof of~\eqref{limit} and that of Theorem~\ref{t3.2}. 
\end{proof}

\subsection{Observability inequality for the wave equation}
\label{s3.3}
In this section, we prove that if $[v_0,v_1]\in\HH^{-\sigma}$ with $\sigma\in(0,\frac12)$, then the solution~$v(t,x)$ of problem~\eqref{3.1}, \eqref{2.2}, \eqref{1.12} with $\gamma=0$ and $b\equiv0$ satisfies~\eqref{3.2}. To this end, we first recall that $\HH^s=\HH_D^s$ for $\frac12<s<\frac32$, and therefore, by Theorem~1.3 in~\cite{DL-2009}, for any $[u_0,u_1]\in\HH^{\sigma+1}$ there is $\zeta\in L^2(J_T,H^\sigma)$ such that 
\begin{equation} \label{3.15}
\bigl\|\zeta\bigr\|_{L^2(J_T,H^{\sigma})}\le C\bigl(\|u_0\|_{\sigma+1}+\|u_1\|_{\sigma}\bigr), 
\end{equation}
and the solution $u\in C(J_T,H_0^{\sigma+1})\cap C^1(J_T,H^\sigma)$ of the problem 
$$
\ddot u-\Delta u=\chi(x)\zeta(t,x), \quad u(0,x)=u_0(x), \quad \dot u(0,x)=u_1(x), 
$$
satisfies the relation
\begin{equation} \label{3.16}
u(T,x)= \dot u(T,x)\equiv0. 
\end{equation}
Now let $v\in C(J_T,H^{-\sigma})\cap C^1(J_T,H^{-\sigma-1})$ be the solution of~\eqref{3.1}, \eqref{2.2}, \eqref{1.12} with $[v_0,v_1]\in \HH^{-\sigma}$. Then, in view of~\eqref{3.16}, we have 
\begin{align*}
(u_0,v_1)-(u_1,v_0)&=\int_0^T\frac{d}{dt}\bigl((\dot u,v)-(u,\dot v)\bigr)\,dt
=\int_0^T\bigl((\ddot u,v)-(u,\ddot v)\bigr)\,dt\\
&=\int_0^T\bigl((\Delta u+\chi\zeta,v)-(u,\Delta v)\bigr)\,dt=\int_0^T(\zeta,\chi v)\,dt\\
&\le \bigl\|\zeta\bigr\|_{L^2(J_T,H^{\sigma})}\,\bigl\|\chi v\bigr\|_{L^2(J_T,H^{-\sigma})}. 
\end{align*}
Taking $u_0=(-\Delta)^{-\sigma-1}v_1\in H_0^{\sigma+1}$ and $u_1=-(-\Delta)^{-\sigma}v_0\in H^{\sigma}$ and using~\eqref{3.15}, we arrive at the required inequality~\eqref{3.2}.

\section{Main result: stabilisation of the non-linear problem}
\label{s4}
Let us consider the nonlinear problem~\eqref{1}--\eqref{3}, where~$\Omega\subset\R^3$ is a bounded domain with a $C^2$-smooth boundary~$\Gamma$. We assume that $\gamma>0$, and the function $f\in C^2(\R)$ satisfies conditions~\eqref{1.2} and~\eqref{1.5}. Let us consider a solution~$\hat u(t,x)$ with initial data $[\hat u_0,\hat u_1]\in (H_0^1\cap H^2)\times H_0^1$ and a right-hand side~$h\in W^{1,\infty}(\R_+,L^2)$. 
By Proposition~\ref{p1.2}, there is $C>0$ such that 
\begin{equation} \label{4.1}
\|\hat u(t,\cdot)\|_2+\|\p_t\hat u(t,\cdot)\|_1\le C\quad\mbox{for all $t\ge0$}. 
\end{equation}
Let us define a function $b$ by the relation $b(t,x)=f'(\hat u(t,x))$. It follows from~\eqref{4.1} and the conditions imposed on~$f$ that~$b$ satisfies~\eqref{2.5} with $r=1$.  Therefore, if Conditions~\ref{c2.1} and~\ref{c2.2} are also satisfied, then Theorem~\ref{t2.1} is applicable, and one can construct a feedback law~$K_{\hat u}(t):=K_b(t)$ exponentially stabilising the linearised problem~\eqref{2.1}, \eqref{2.2}. The following theorem, which is the main result of this paper, shows that the same law stabilises locally exponentially also the nonlinear problem. 

\begin{theorem} \label{t4.1}
Under the above hypotheses, there are positive constants~$C$ and~$\e$ such that, for any initial data $[u_0,u_1]\in H_0^1\times L^2$ satisfying the inequality
\begin{equation} \label{4.2}
\|u_0-\hat u_0\|_1+\|u_1-\hat u_1\|\le\e,
\end{equation}
problem~\eqref{4}, \eqref{2}, \eqref{3} with 
$\eta(t,x)=K_{\hat u}(t)\varPhi_{u-\hat u}(t)$ has a unique solution
$$
u\in \XX:=C(\R_+,H_0^1)\cap C^1(\R_+,L^2),
$$
for which  inequality~\eqref{5} holds.
\end{theorem}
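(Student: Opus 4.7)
The plan is to set $w := u - \hat u$ and show that $w$ decays exponentially in the energy space. Subtracting \eqref{1} for $\hat u$ from \eqref{4} with $\eta(t) = K_{\hat u}(t)\varPhi_w(t)$ and using the Taylor expansion $f(\hat u + w) = f(\hat u) + f'(\hat u)\,w + g(t,w)$, where $g(t,w) = \int_0^1 (1-\theta) f''(\hat u + \theta w)\,d\theta\cdot w^2$, the function $w$ must satisfy
\begin{equation*}
\ddot w + \gamma\dot w - \Delta w + b(t,x)\,w = K_{\hat u}(t)\varPhi_w(t) - g(t,w),\qquad b(t,x) := f'(\hat u(t,x)).
\end{equation*}
The hypotheses \eqref{1.2}, \eqref{1.5} on $f$, the a priori bound \eqref{4.1} (which places $\hat u$ in $L^\infty$ via $H^2\hookrightarrow L^\infty$ in three dimensions), and the embedding $H_0^1(\Omega)\hookrightarrow L^6(\Omega)$ give
\begin{equation*}
\|g(t,w)\|_{L^2} \le M\bigl(E_w(t) + E_w(t)^{3/2}\bigr)
\end{equation*}
for a constant $M$ independent of $t$.

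First I would establish local well-posedness of the closed-loop nonlinear problem in $\XX$. The feedback $K_{\hat u}(t)$ is uniformly bounded from $\HH := H_0^1\times L^2$ into the finite-dimensional subspace $\FF_m\subset H_0^1$ and depends on $t$ continuously in the weak operator topology, so the term $K_{\hat u}(\cdot)\varPhi_w(\cdot)$ is a Lipschitz perturbation in the energy norm. Combining this with the subcritical cubic growth of $f$ guaranteed by \eqref{1.2}, a standard contraction-mapping argument based on Duhamel's formula and Proposition~\ref{p1.1} yields a unique maximal solution $u\in C([0,T_{\max}),H_0^1)\cap C^1([0,T_{\max}),L^2)$, together with the blow-up alternative that either $T_{\max} = +\infty$ or $\sup_{t\uparrow T_{\max}} E_{u-\hat u}(t) = +\infty$.

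The heart of the proof is the global exponential bound, which I would derive via a Duhamel-Gronwall bootstrap. Let $\mathcal{S}_L(t,s):\HH\to\HH$ denote the flow of the closed-loop linearised equation, that is, the map sending data at time $s$ to $\varPhi_v(t)$, where $v$ solves \eqref{2.1}, \eqref{2.2} with coefficient $b$ and feedback $\eta(\tau) = K_{\hat u}(\tau)\varPhi_v(\tau)$. By Theorem~\ref{t2.1} applied on each subinterval starting at $s$, one has $\|\mathcal{S}_L(t,s)V\|_\HH \le C_0\,e^{-\beta(t-s)/2}\|V\|_\HH$ for all $0\le s\le t$. Duhamel's formula for the full nonlinear problem reads
\begin{equation*}
\varPhi_w(t) = \mathcal{S}_L(t,0)\varPhi_w(0) - \int_0^t \mathcal{S}_L(t,s)\bigl[0,\,g(s,w(s))\bigr]\,ds,
\end{equation*}
and under the a priori smallness $\sup_{s\in[0,t]} E_w(s)^{1/2}\le \delta$ the bound on $g$ reduces this to
\begin{equation*}
E_w(t)^{1/2} \le C_0\, e^{-\beta t/2}\,E_w(0)^{1/2} + C_0 M\,\delta(1+\delta)\int_0^t e^{-\beta(t-s)/2}\,E_w(s)^{1/2}\,ds.
\end{equation*}
Multiplying by $e^{\beta t/4}$ and applying Gronwall's inequality, for $\delta$ small enough (depending only on $\beta$, $C_0$, $M$) this yields the target inequality $E_w(t) \le C_1\,e^{-\beta t/2}\,E_w(0)$.

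A continuity argument closes the bootstrap. Setting $T^\ast = \sup\{T \in [0,T_{\max}) : \sup_{[0,T]} E_w^{1/2}\le \delta\}$ and taking $\e$ so small that $\sqrt{C_1}\,\e < \delta/2$ and $\|[u_0-\hat u_0, u_1-\hat u_1]\|_\HH \le \e$, the preceding estimate gives $E_w(t)^{1/2} < \delta$ strictly on $[0,T^\ast]$; hence $T^\ast = T_{\max}$, and the blow-up alternative then forces $T_{\max}=+\infty$, which is the required \eqref{5}. Uniqueness follows by applying the same Duhamel estimate to the difference of two solutions and using Gronwall. The main technical obstacle is to close the bootstrap in the presence of the superlinear remainder $g(t,w)$ with its $w^3$ term, which is borderline for Sobolev embeddings in three space dimensions but is absorbed by the smallness of the data combined with the exponential decay of $\mathcal{S}_L(t,s)$; a secondary subtlety is that $\mathcal{S}_L$ is a genuine time-dependent evolution family rather than a semigroup, so its decay must be invoked through Theorem~\ref{t2.1} applied on each subinterval $[s,t]$.
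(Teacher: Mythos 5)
Your proposal is correct and rests on the same ingredients as the paper's proof: the decomposition $u=\hat u+w$, the quadratic Taylor remainder $g$ controlled through $|f''(u)|\le C(1+|u|)$, the a priori bound \eqref{4.1} and Sobolev embeddings to get $\|g\|\lesssim E_w+E_w^{3/2}$, and, above all, the exponential decay of the closed-loop linearised evolution family $U(t,s)$ supplied by Theorem~\ref{t2.1}, fed into the Duhamel representation. The only genuine difference is how globality is closed: the paper runs a contraction mapping $\Xi$ directly in the exponentially weighted space $\ZZ_\theta=\{v:\ E_v(t)\le\theta e^{-\beta t}E_v(0)\}$, which produces global existence and the decay \eqref{5} in a single stroke, whereas you first establish local well-posedness with a blow-up alternative and then close a smallness bootstrap by Gronwall and a continuity argument. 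Both routes are valid; yours costs an extra (routine) local well-posedness step and a slight degradation of the decay rate coming from the Gronwall absorption (and note that the weight should be $e^{\beta t/2}$ rather than $e^{\beta t/4}$ to turn the convolution into a plain integral, though either choice yields the conclusion), while the paper's weighted fixed point avoids the continuation argument at the price of verifying invariance and contractivity of $\Xi$ on $\ZZ_\theta$ under the constraint $\theta\e^2\le1$.
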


\begin{proof}
The proof based on a fixed point argument is rather standard  (cf.\ Section~4 in~\cite{BRS-2011}), and therefore we shall only outline it. 

\smallskip
We seek a solution of the form $u=\hat u+v$. Then $v$ must be a solution of the problem
\begin{align} 
\ddot v+\gamma\dot v-\Delta v+f(\hat u+v)-f(\hat u)&=\eta(t,x),\label{4.3}\\
v\bigr|_{\p \Omega}&=0,\label{4.4}\\
v(0,x)=v_0(x), \quad \p_tv(0,x)&=v_1(x), 
\label{4.5}
\end{align}
where $v_0=u_0-\hat u_0$, $v_1=u_1-\hat u_1$, and $\eta(t)=K_{\hat u}(t)\varPhi_v(t)$. Let us fix $\theta>0$ and define the metric space
$$
\ZZ_\theta:=\{v\in \XX:\varPhi_v(0)=[v_0,v_1],E_{v}(t)\le \theta\,e^{-\beta t}E_{v}(0)\mbox{ for $t\ge0$}\}. 
$$
If we construct a solution $v\in\ZZ_\theta$, then the corresponding function $u=\hat u+v$ will be the required solution of the original problem. The fact that there are no other solutions can easily proved by a standard argument (e.g., see Chapter~1 of~\cite{lions1969}). 

\smallskip
{\it Step~1}. 
Let us endow~$\ZZ_\theta$ with the metric generated by the norm
$$
\|v\|_\ZZ=\sup_{t\ge0}\bigl(e^{\beta t}E_v(t)\bigr)^{1/2}.
$$
Define a mapping $\Xi:\ZZ_\theta\to\XX$ that takes $w\in\ZZ_\theta$ to the solution of~\eqref{2.1}--\eqref{1.12}, in which
$$
b(t,x)=f'(\hat u(t,x)), \quad
 \eta(t,x)=K_{\hat u}(t)\varPhi_v(t)-\bigl(f(\hat u+w)-f(\hat u)-b(t,x)w\bigr).
$$
The mapping~$\Xi$ is well defined. Indeed, the homogeneous problem (that is system~\eqref{2.1}, \eqref{2.6} with $\eta=K_{\hat u}(t)\varPhi_v(t)$) is well posed in view of Theorem~\ref{t2.1}, while a solution of the inhomogeneous equation can be written in the form of the Duhamel integral. Suppose we have shown that, for an appropriate choice of~$\theta$, the mapping~$\Xi$ is a contraction in~$\ZZ_\theta$. Then the unique fixed point $v\in\ZZ_\theta$ for~$\Xi$ is a solution of~\eqref{4.3}--\eqref{4.5}, and it satisfies~\eqref{5} with $C=\theta$. 

\smallskip
{\it Step~2}. 
Let us prove that~$\Xi$ maps the space~$\ZZ_\theta$ into itself. Define $\HH=H_0^1\times L^2$ and  denote by $U(t,s):\HH\to\HH$ the operator that takes $[v_0,v_1]$ to~$v(t)$, where $v(t,x)$ is the solutions of~\eqref{2.1}, \eqref{2.2}, \eqref{2.6} with $\eta(t)=K_{\hat u}(t)\varPhi_v(t)$. Then we can write
\begin{equation} \label{4.6}
(\Xi w)(t)=U(t,0)[v_0,v_1]-\int_0^tU(t,s)[0,g(s)]\,ds,
\end{equation}
where we set $g(t,x)=f(\hat u+w)-f(\hat u)-f'(\hat u)w$. 
By Theorem~\ref{t2.1}, the operator norm of~$U(t,s)$ satisfies the inequality
\begin{equation} \label{4.7}
\|U(t,s)\|_\LL^2\le C\,e^{-\beta(t-s)}, \quad t\ge s\ge0. 
\end{equation}
Combining this with~\eqref{4.6}, we see that
\begin{align}
\|(\Xi w)(t)\|_\HH^2
&\le 2\,\|U(t,0)\|_\LL^2\bigl\|[v_0,v_1]\bigr\|_\HH^2
+2\,\biggl(\int_0^t\|U(t,s)\|_\LL\|g(s)\|\,ds\biggr)^2\notag\\
&\le2Ce^{-\beta t} \biggl(\bigl\|[v_0,v_1]\bigr\|_\HH^2
+\Bigl(\,\int_0^te^{\beta s/2}\|g(s)\|\,ds\Bigr)^2\biggr).
\label{4.8}
\end{align}
Now note that (see Section~4 in~\cite{BRS-2011})
$$
\sup_{t\ge0}\Bigl(\,\int_0^te^{\beta s/2}\|g(s)\|\,ds\Bigr)^2
\le C_1\sup_{t\ge0}\int_t^{t+1}e^{2\beta s}\|g(s)\|^2ds. 
$$
Substituting this into~\eqref{4.8}, we derive
\begin{equation} \label{4.9}
\|\Xi w\|_{\ZZ}^2\le 2\bigl\|[v_0,v_1]\bigr\|_\HH^2
+C_1\sup_{t\ge0}\int_t^{t+1}e^{2\beta s}\|g(s)\|^2ds.
\end{equation}
It follows from the conditions imposed on~$f$, the Taylor expansion, and inequality~\eqref{4.1} that, for any $w\in\ZZ_\theta$ and $[v_0,v_1]\in\HH$ satisfying~\eqref{4.2}, we have
\begin{align*}
\|g(s)\|^2
&\le C_2\bigl(1+\|\hat u(s)\|_1^2+\|w(s)\|_1^2\bigr)\,\|w(s)\|_1^4\\
&\le C_3\theta\e^2(1+\theta\e^2)e^{-2\beta s}\bigl\|[v_0,v_1]\bigr\|_\HH^2. 
\end{align*}
Combining this with~\eqref{4.9} and assuming that~$\theta$ is sufficiently large and $\theta\e^2\le1$, we see that $\Xi(\ZZ_\theta)\subset\ZZ_\theta$.

{\it Step 3}. It remains to prove that~$\Xi$ is a contraction. It follows from~\eqref{4.6} that if $w_1,w_2\in\ZZ_\theta$, then
$$
(\Xi w_1-\Xi w_2)(t)=\int_0^tU(t,s)[0,h(s)]\,ds,
$$
where $h(s)=f(\hat u+w_2)-f(\hat u+w_1)-f'(\hat u)(w_2-w_1)$. Repeating the argument used in the derivation of~\eqref{4.9}, we obtain
\begin{equation} \label{4.10}
\|\Xi w_1-\Xi w_2\|_{\ZZ}^2\le C_1\sup_{t\ge0}\int_t^{t+1}e^{2\beta s}\|h(s)\|^2ds.
\end{equation}
Using the mean value theorem, we easily show that 
$$
\|h(s)\|^2\le C_4\|w\|_1^2 \bigl(\|w_1\|_1^2+\|w_2\|_1^2\bigr)
 \bigl(1+\|w_1\|_1^2+\|w_2\|_1^2\bigr),
$$
where $w=w_1-w_2$. Substituting this into~\eqref{4.10} and using~\eqref{4.2}, we derive
$$
\|\Xi w_1-\Xi w_2\|_{\ZZ}^2\le C_5\theta\e^2\|w\|_\ZZ^2.
$$
Hence, if $\e>0$ is sufficiently small, then $\Xi$ is a contraction. This completes the proof of Theorem~\ref{t4.1}. 
\end{proof}

\addcontentsline{toc}{section}{Bibliography}
\def\cprime{$'$} \def\cprime{$'$}
  \def\polhk#1{\setbox0=\hbox{#1}{\ooalign{\hidewidth
  \lower1.5ex\hbox{`}\hidewidth\crcr\unhbox0}}}
  \def\polhk#1{\setbox0=\hbox{#1}{\ooalign{\hidewidth
  \lower1.5ex\hbox{`}\hidewidth\crcr\unhbox0}}}
  \def\polhk#1{\setbox0=\hbox{#1}{\ooalign{\hidewidth
  \lower1.5ex\hbox{`}\hidewidth\crcr\unhbox0}}} \def\cprime{$'$}
  \def\polhk#1{\setbox0=\hbox{#1}{\ooalign{\hidewidth
  \lower1.5ex\hbox{`}\hidewidth\crcr\unhbox0}}} \def\cprime{$'$}
  \def\cprime{$'$} \def\cprime{$'$} \def\cprime{$'$}
\providecommand{\bysame}{\leavevmode\hbox to3em{\hrulefill}\thinspace}
\providecommand{\MR}{\relax\ifhmode\unskip\space\fi MR }
\providecommand{\MRhref}[2]{%
  \href{http://www.ams.org/mathscinet-getitem?mr=#1}{#2}
}
\providecommand{\href}[2]{#2}

\end{document}